\let\comjohannes\undefined
\allowcomments{\comjohannes}{JS}{Johannes}{magenta}
\newcommand{\DDD}{\DD_n(\Psi)}
\newcommand{\DDh}{\DD_n(\Psi)}
\newcommand{\DDp}{\DD_2(\Psi)}
\begin{document}
\title{Diophantine approximation on  curves}

\begin{abstract}Let $g$ be a dimension function. The Generalised Baker-Schmidt Problem (1970) concerns the $g$-dimensional Hausdorff measure ($\HH^g$-measure) of the set of 
	$\Psi$-approximable points on nondegenerate manifolds. The problem relates the `size' of the set of $\Psi$-approximable points with the convergence or divergence of a certain series. In the dual 
	approximation setting, the divergence case has been established by Beresnevich-Dickinson-Velani (2006) for any nondegenerate manifold. The convergence case, however, represents a major challenging open problem and progress thus far has been effectuated 	in limited cases only. In this paper, we discuss and prove several results on $\HH^g$-measure on Veronese curves in any dimension $n$. As a consequence of one of our results, we generalize recent results of Pezzoni [Acta Arith. 193 (2020), no. 3, 269--281] regarding $n=3$. This improvement evolves from a deeper investigation  on  general irreducibility considerations applicable in arbitrary dimension. 
We further investigate the $\HH^g$-measure for convergence on planar curves. We show that the monotonicity assumption on a multivariable approximating function cannot be removed for planar curves.

\end{abstract}
\maketitle


\section{Introduction}
\label{intro}
Let $\Psi:\N\to\Rplus$ be a function such that $\Psi(q)\to 0$ as $q\to \infty$. Khintchine's theorem (1924) is a fundamental result which states that, if $\Psi$ is decreasing, the Lebesgue measure of the set of $\Psi$-approximable numbers
$$W(\Psi):=\{x\in \R: |x-p/q|<\Psi(q) \ {\rm for \ infinitely \ many} \  (p, q)\in \Z\times\N\}$$
is either zero or full if the sum $\sum_{q=1}^\infty q\Psi(q)$  converges or diverges, respectively, see \cite{BDV} for a statement of the modern version of this theorem. The monotonicity of $\Psi$ is only needed 
for the divergence part.  In fact, Duffin--Schaeffer (1941) constructed a counterexample to show that without the monotonicity assumption Khintchine's theorem is false. However, they conjectured a refined statement that holds
for non-monotonic $\Psi$ as well, which has been recently affirmatively proved by Koukoulopoulos--Maynard \cite{KM2019}. 



The higher dimensional theory of Diophantine approximation splits into two different types: the simultaneous and the dual.
Simultaneous Diophantine approximation comprises the component-wise approximation of points $\yy=(y_1,\dots, y_n)^T\in \R^n$ by $n$-tuples of rational points $\{\pp/q:(\pp, q)\in \Z^n\times \N\}$, whereas dual Diophantine approximation consists of the approximation of points $\xx=(x_1,\dots, x_n)\in \R^n$ by `rational' hyperplanes of the form $q_1x_1+\cdots+q_nx_n=p$, where $(p, \qq)= (p, q_1, \ldots q_n)\in\Z\times \Z^n\setminus\{\0\}$. 
The metrical theory for these problems date back to 
Khintchine,	Jarn\'ik, Groshev, and remains to this day a prestigious, well-studied topic in Diophantine approximation.
In this paper we are concerned with a problem in dual approximation. 

\medskip

\noindent{\bf Notation.} 
Throughout, unless otherwise specified, let $\Psi:\N\to[0, \infty)$ be a decreasing function such that
$$\Psi(\|\qq\|) \rightarrow~0 \text{ as } \|\qq\|:=\max(|q_1|, \ldots, |q_n|) \rightarrow~\infty,$$  
referred to as an \emph{approximating function}.  By a dimension function $g$ we mean an increasing continuous function $g:\mathbb{R}^+\to \mathbb{R}^+$ with $g(0)=0$. By $\HH^g$-measure, we mean the $g$-dimensional Hausdorff measure and when $g(r)=r^s$ we
	denote  $\HH^g$ by $\HH^s$.
If $s$ is an integer
then $\HH^s$ is proportional to the standard $s$-dimensional Lebesgue measure.
We refer to Section \ref{hm} below for a brief introduction to Hausdorff measure. For real quantities $A,B$ and a parameter $t$, we write $A \lessless_t B$ if $A \leq c(t) B$ for a constant $c(t) > 0$ that depends on $t$ only (while $A$ and $B$ may depend on other parameters). We write  $A\asymp_{t} B$ if $A\lessless_{t} B\lessless_{t} A$.
If the constant $c>0$ is absolute, we simply write $A\lessless B$ and $A\asymp B$.

\subsection{Dual Diophantine approximation.}
 We consider the dual approximation 
problem with respect to approximating function $\Psi$. Concretely, we are concerned with the set

\begin{equation*}
  \DDD:=\left\{\xx=(x_1,\dots,x_n)\in\R^n:\begin{array}{l}
  |q_1x_1+\cdots+q_nx_n+p|<\Psi(\|\qq\|) \\[1ex]
  \text{for i.m.} \ (p, q_1, \ldots, q_n)\in\Z^{n+1}
                           \end{array}
\right\},
\end{equation*}
where  `i.m.' stands for `infinitely many'.  A vector $\xx \in \R^n$ will be called \emph{$\Psi$-approximable} if it lies in the set $\DDD$.  
The following statement is the most modern result which relates the `size' of the set $\DDD$ in terms of  $\HH^g$-measure to the convergence or divergence of a certain series. This result encompasses contributions from many authors but most importantly works of Jarn\'ik \cite{Jarnik2}, Schmidt \cite{Schmidt8}, Dickinson--Velani \cite{DickinsonVelani}, and Beresnevich--Velani \cite{BeresnevichVelani4}.

%
%

\begin{theorem}\label{JSDV}
Let $\Psi$ be an approximating function. Let $g$ be a dimension function such that $r^{-n}g(r)\to\infty$ as $r\to 0.$ Assume that $r\mapsto r^{-n}g(r)$ is decreasing and $r\mapsto r^{1-n}g(r)$ is increasing.  Then
\begin{equation*}
  \HH^g( \DDD)=\left\{\begin{array}{cl}
 0 &  {\rm if } \quad\sum\limits_{\qq\in\Z^n\setminus \{\0\}}\|\qq\|^n\Psi(\|\qq\|)^{1-n}g\left(\frac{\Psi(\|\qq\|)}{\|\qq\|}\right)< \infty.\\[3ex]
 \infty &  {\rm if } \quad \sum\limits_{\qq\in\Z^n\setminus \{\0\}}\|\qq\|^n\Psi(\|\qq\|)^{1-n}g\left(\frac{\Psi(\|\qq\|)}{\|\qq\|}\right)=\infty \text{ and $\Psi$ is decreasing or $n\geq 2$}.
                                     \end{array}\right.
\end{equation*}
\end{theorem}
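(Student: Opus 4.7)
The plan is to realize $\DDD$ as a $\limsup$ set and handle the two directions by classical tools: the Hausdorff--Cantelli Lemma for convergence, and a combination of the Khintchine--Groshev theorem with the Mass Transference Principle (or ubiquity) for divergence. Since the defining condition is invariant under integer translations of $\xx$, it suffices to work inside the unit cube, writing
$$
H_{p,\qq} := \bigl\{ \xx \in [0,1]^n : |q_1 x_1 + \cdots + q_n x_n + p| < \Psi(\|\qq\|) \bigr\},
$$
so that $\DDD \cap [0,1]^n = \limsup_{\|\qq\| \to \infty} \bigcup_{p \in \Z} H_{p,\qq}$. Each $H_{p,\qq}$ is a slab of normal thickness $\asymp \Psi(\|\qq\|)/\|\qq\|$ around an affine hyperplane, and for fixed $\qq \neq \0$ the number of $p$ yielding a nonempty slab is $\lessless \|\qq\|$.

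For the convergence direction I would cover each $H_{p,\qq}$ by $\lessless r_\qq^{1-n}$ balls of radius $r_\qq := \Psi(\|\qq\|)/\|\qq\|$ — the natural scale, since the two monotonicity hypotheses on $g$ together say that $g(r)$ behaves between $r^n$ and $r^{n-1}$, which makes this cover the most economical. Summing $g(r_\qq)$ over the cover and over all $(p,\qq)$ produces precisely
$$
\sum_{\qq \in \Z^n \setminus \{\0\}} \|\qq\|^n \Psi(\|\qq\|)^{1-n} g\!\left( \Psi(\|\qq\|)/\|\qq\| \right),
$$
so under the convergence hypothesis the Hausdorff--Cantelli Lemma yields $\HH^g(\DDD) = 0$. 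This half is essentially bookkeeping once the geometric cover is in place.

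The divergence case is where the real work lies. My strategy would be first to apply the Khintchine--Groshev theorem to deduce, under the weaker Lebesgue-level divergence hypothesis on $\Psi$, that $\DDD$ has full Lebesgue measure in $\R^n$; this is where the dichotomy ``$\Psi$ decreasing or $n \ge 2$'' enters, since Groshev's theorem dispenses with monotonicity precisely when $n \ge 2$. I would then invoke the Mass Transference Principle of Beresnevich--Velani, in its variant for hyperplane neighborhoods, to upgrade this Lebesgue statement to $\HH^g(\DDD) = \infty$. The two monotonicity conditions on $g$ are tailored to this transfer: they encode that $g$'s effective dimension lies between $n-1$ (the dimension of the approximating hyperplanes) and $n$ (the ambient dimension), which is exactly the regime where transference applies. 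The main obstacle, and what requires the most care, is constructing the rescaled approximating function on the Lebesgue side so that its divergence matches the given Hausdorff divergence series in the statement; this bookkeeping follows the template of \cite{BeresnevichVelani4}. An alternative route via the Dickinson--Velani ubiquity framework — exhibiting the family of rational hyperplanes as a ubiquitous system and invoking the general divergence theorem directly — would deliver the same conclusion.
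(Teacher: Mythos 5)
The paper does not prove Theorem \ref{JSDV}; it is stated as a known result, with the proof attributed to the cited works of Jarn\'ik, Schmidt, Dickinson--Velani, and Beresnevich--Velani. Your sketch faithfully reproduces the standard argument from that literature: the natural-scale slab cover plus the Hausdorff--Cantelli Lemma for the convergence half (your cost count $\|\qq\|\cdot r_\qq^{1-n}\cdot g(r_\qq)$ with $r_\qq=\Psi(\|\qq\|)/\|\qq\|$ does reproduce the series exactly), and Khintchine--Groshev combined with the Mass Transference Principle for systems of linear forms (or, equivalently, the Dickinson--Velani ubiquity framework) for the divergence half. The one imprecision worth noting is in your opening sentence for the divergence case: it is not $\DDD$ that one first shows to have full Lebesgue measure, but $\DD_n(\Theta)$ for a suitably \emph{rescaled} approximating function $\Theta$, chosen so that its Lebesgue Khintchine--Groshev series diverges exactly when the $\HH^g$ series for $\Psi$ does; the Mass Transference Principle then converts full Lebesgue measure of $\DD_n(\Theta)$ into $\HH^g(\DDD)=\infty$. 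You correct this implicitly when you flag the rescaling as the ``main obstacle'' and ``bookkeeping'' step, so I would read this as a presentation slip rather than a gap in the argument.
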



  The conditions on the approximating function and dimension function  comes into play only for the divergence case.  In particular, the assumption that $\Psi$ is decreasing for $n=1$ is necessary due to work of Duffin--Schaeffer, as indicated above.   For $n\geq2$, the monotonicity assumption on the approximating function $\Psi$ is not required  \cite{BeresnevichVelani4}.

\subsection{Dual Diophantine approximation on manifolds}

Diophantine approximation on manifolds (or Diophantine approximation of dependent quantities) concerns the study of approximation properties of points in $\R^n$ which are functionally related or in other words restricted to a sub-manifold $\MM$ of $\R^n$. To estimate the size of sets of points $\xx\in\R^n$ which lie on a $k$-dimensional, nondegenerate\footnote{In this context `nondegenerate' means suitably curved, see \cite{Beresnevich3,KleinbockMargulis2} for precise formulations. Actually, within the theory of metric Diophantine approximation on manifolds, most of the results requires the manifolds to be nondegenerate. Removing this assumption presents significant challenges and the progress thus far has been achieved for limited  situations only, see for example \cite{BGGV} for the latest results in this direction.}, analytic submanifold $\MM\subseteq \R^n$ is an intricate and challenging problem. The fundamental aim is to estimate the size of the set $\MM\cap\DDD$ in terms of Lebesgue measure, Hausdorff measure and Hausdorff dimension. When asking such questions it is natural to phrase them in terms of a suitable measure supported on the manifold, since when $k<n$ the $n$-dimensional Lebesgue measure of $\MM\cap \DDD$ is zero irrespective of the approximating functions. For this reason,  results in the dependent Lebesgue measure theory (for example, Khintchine--Groshev type theorems for manifolds) are posed in terms of the $k$-dimensional 
Lebesgue measure (=Hausdorff measure) on $\MM$.

 

In full generality, a complete Hausdorff measure treatment akin to Theorem \ref{JSDV} for manifolds $\MM$  represents a deep open problem in the theory of Diophantine approximation. The problem is referred to as the Generalised Baker-Schmidt Problem (GBSP) inspired by the pioneering work of Baker-Schmidt \cite{BakerSchmidt}.  Ideally one would want to solve the following problem in full generality.

\begin{problem}[Generalised Baker-Schmidt Problem for Hausdorff Measure] Let $\MM$ be a nondegenerate submanifold of $\R^n$ with $\dim\MM=k$ and $n \geq 2$. Let $\Psi$ be an approximating function.  Let $g$ be a dimension function such that $r^{-k}g(r)\to\infty$ as $r\to 0.$ Assume that $r\mapsto r^{-k}g(r)$ is decreasing and $r\mapsto r^{1-k}g(r)$ is increasing. Prove that 

\begin{equation*}
  \HH^g( \DDD\cap\MM)=\left\{\begin{array}{cl}
 0, &  {\rm if } \quad\sum\limits_{\qq\in\Z^n\setminus \{\0\}}\|\qq\|^k\Psi(\|\qq\|)^{1-k }g\left(\frac{\Psi(\|\qq\|)}{\|\qq\|}\right)< \infty.\\[3ex]
 \infty, &  {\rm if } \quad \sum\limits_{\qq\in\Z^n\setminus \{\0\}}\|\qq\|^k\Psi(\|\qq\|)^{1-k}g\left(\frac{\Psi(\|\qq\|)}{\|\qq\|}\right)=\infty.
                                     \end{array}\right.
\end{equation*}
\end{problem}

In fact, this problem is stated in an idealistic format and solving it in this form is extremely challenging. The main difficulties lie
 in the convergence case and therein constructing a suitable nice cover for the set $ \DDD\cap\MM$. We list the contributions to date to highlight the significance of this problem.



Diophantine approximation on manifolds dates back to the profound conjecture of K. Mahler \cite{Mahler2} in 1932, which can be rephrased as the statement that $\HH^1(\DDh\cap \VV_n)=0$ for $\Psi(q)=q^{-\tau}$ with $\tau>n$. Here and throughout $\VV_n=\{(x, x^2, \ldots, x^n) : x\in\R\}$ denotes the \emph{Veronese} curve. Mahler's conjecture was eventually proven in 1965 by Sprind\v zuk \cite{Sprindzuk}, who then conjectured that the same holds when $\VV_n$ is replaced by any nondegenerate analytic manifold, and $\HH^1$ is replaced by $\HH^k$ where $k$ is the dimension of this manifold. Although particular cases of Sprind\v zuk's conjecture were known, it was not until 1998 that Kleinbock and 
   Margulis \cite{KleinbockMargulis2} established Sprind\v zuk's conjecture in full generality by using dynamical tools based on diagonal flows on homogeneous spaces. This breakthrough result acted as a catalyst for the subsequent progress in this area of research. The convergence Lebesgue measure result for $\DDh\cap\MM$ was established first in \cite{Beresnevich3} and then in a generalised form in \cite{BKM2}. The divergence case was established for the Veronese curve in \cite{Ber99} and for any nondegenerate manifolds in  \cite{BBKM}.  
  The first inhomogeneous Lebesgue measure result for the divergence case of $\DDD\cap\MM$ was established very recently in \cite{BBV}.

With regards to the Hausdorff measure or dimension theory on manifolds,   
  the progress has proven to be  difficult.   The first major result in this direction relating to the approximation of points on the Veronese curve appeared in the landmark paper of Baker and Schmidt \cite{BakerSchmidt} in which they proved upper and lower bounds for Hausdorff dimension on the Veronese curve. Further, they conjectured that their lower bound is actually sharp,
which was later proven to be true in \cite{Bernik}.
In 2000,  Dickinson and Dodson \cite{DickinsonDodson2} proved a lower bound for Hausdorff dimension on extremal manifolds for approximating functions 
of the type $\Psi(q)=q^{-\tau}$ for any $\tau>n$.
In 2006, the divergence case for the $\HH^g$-measure of $\DDh\cap\MM$ was established in \cite{BDV} as a consequence of their ubiquity framework.


 Proving the genuine Hausdorff measure result for convergence for any manifold remained out of reach until recently when the first-named author proved it  in \cite{Hussain} for $\DDp\cap \VV_2$ i.e. for $\HH^g$-measure on the parabola under some mild assumptions on the dimension function $g$. Soon after that, J.-J. Huang \cite{Huang}  proved that $\HH^s(\DDp\cap\CC)=0$ for all nondegenerate planar curves $\CC$ if a certain sum converges. 

Inhomogeneous Diophantine approximation is the generalisation of the homogeneous Diophantine approximation discussed above and results for the latter settings  can be derived from the former settings by considering the inhomogeneous parameter to be zero. We refer the reader to \cite{BBV, BHH, HSS} for accounts of inhomogeneous Baker-Schmidt problem. In particular,  authors of this paper proved the GBSP for hypersurfaces 
for $\HH^g$-measure upon a mild assumption on $g$, for both homogeneous and inhomogeneous settings with non-monotonic multivariable approximating functions \cite{HSS}.  However, the results of \cite{HSS} are not applicable to the one dimensional curves, for example,  planar curves or Veronese curves.  
%
%

 The aim of this paper is to contribute in making some advances on the GBSP on nondegenerate curves in a reasonable generality. We emphasize on the case of Veronese curves, especially the parabola.  
 In Section~\ref{rempc} we discuss planar curves.

\section{The generalised Hausdorff measure on the Veronese curve} \label{se2}
 We start with the simple case of a parabola to familiarise with some intricacies of reducible and  irreducible polynomials. 


\subsection{The parabola} \label{theparabola}Throughout, denote by $$\VV_2=\{(x,x^{2})\in\mathbb{R}^{2}: x\in[0,1]\}$$ the standard parabola. 
  In this case, we change the notation slightly,  instead of $\qq\cdot (x,x^2)-p$ we use  $a_{2}x^2+a_1x+a_0$, i.e. $(\qq, p) \to (a_2, a_1, a_0)$ and  $\|\aa\|=\max\{ |a_{1}|,|a_{2}|\}$.  So we consider the set  
 \begin{equation*}
  \DD(\VV_2, \Psi)= \DD_2(\Psi) \cap  \VV_2=
  \left\{x\in[0,1]: \begin{array}{l}
  |a_2x^2+a_1x+a_0|<\Psi(\|\mathbf a\|) \\
  \text{for\  i.m.}\ (a_0, a_1, a_2)\in\Z^3
                                  \end{array}\right\}.
\end{equation*}

As stated earlier, the first result regarding the convergence case of the $\HH^g$-measure on the parabola was proven  by Hussain \cite{Hussain}.

\begin{theorem}[Hussain, 2015]\label{Hussainparabola}
Let $\Psi$ be an approximating function and let $g$ be a dimension function such that $r^{-1}g(r)\to \infty$ as $r\to 0$
and $g(r)r^{-1}$ is decreasing. 
Assume that there exist positive constants $s_1$ and $s_2\leq 1$ such 
that $2s_1<3s_2$ and
\begin{equation}\label{zzz}
    r^{s_1}< g(r) < r^{s_2}\qquad\text{for all sufficiently small $r>0$}\,.
\end{equation}
Then
\begin{equation}\label{conresult}\HH^g( \DD(\VV_2, \Psi))=
  0 \ \  {\rm if } \quad \sum\limits_{q=1}^{\infty}g\left(\frac{\Psi(q)}{q}\right)q^2<\infty. \end{equation}
 

\end{theorem}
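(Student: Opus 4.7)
The plan is to invoke the Hausdorff--Cantelli lemma applied to the natural limsup decomposition. Writing $P_\mathbf{a}(x) := a_0 + a_1 x + a_2 x^2$ and $A_\mathbf{a} := \{x \in [0,1] : |P_\mathbf{a}(x)| < \Psi(\|\mathbf{a}\|)\}$, we have $\DD(\VV_2,\Psi) = \limsup_\mathbf{a} A_\mathbf{a}$. It therefore suffices to produce, for each $\mathbf{a} \in \mathbb{Z}^3 \setminus \{0\}$, an interval cover of $A_\mathbf{a}$ whose total $g$-content is summable over $\mathbf{a}$; the Hausdorff--Cantelli lemma will then force $\HH^g(\DD(\VV_2,\Psi)) = 0$.

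First, I would partition the integer triples $\mathbf{a}$ according to whether $P_\mathbf{a}$ is reducible or irreducible over $\mathbb{Q}$. For reducible $P_\mathbf{a} = a_2(x - p/r)(x - \beta)$, the piece of $A_\mathbf{a}$ near the rational root $p/r \in [0,1]$ is an interval of length at most $\Psi(\|\mathbf{a}\|)/|a_2(p/r - \beta)|$; aggregating over all $\mathbf{a}$ that share a given rational root reduces this contribution to a one-dimensional Diophantine approximation problem, which is controlled by Theorem~\ref{JSDV} with $n=1$ once one checks that the hypothesised summability implies its one-dimensional counterpart.

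For irreducible $P_\mathbf{a}$, let $D := a_1^2 - 4 a_0 a_2$ (a nonzero non-square integer), set $q := \|\mathbf{a}\|$, and rewrite $P_\mathbf{a}(x) = a_2(x - x_0)^2 - D/(4 a_2)$ with vertex $x_0 = -a_1/(2 a_2)$. This gives a dichotomy: if $|D|$ exceeds a fixed multiple of $q \Psi(q)$, then $A_\mathbf{a}$ lies in at most two short intervals of length comparable to $\Psi(q)/\sqrt{|D|}$ around the real roots; otherwise $A_\mathbf{a}$ lies in a single wider interval of length comparable to $\sqrt{\Psi(q)/q}$ around the vertex. For each dyadic scale $\|\mathbf{a}\| \asymp q$, $|D| \asymp T^2$, the number of integer triples is $O(q T^2 + q^2)$: fixing $(a_1, a_2)$ leaves $O(T^2/q + 1)$ admissible integers $a_0$ compatible with the discriminant constraint. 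Combining the interval lengths with these counts and dyadically summing in $T$ yields the total $g$-mass contributed by irreducible polynomials, and the final step is to dominate this total by a constant multiple of $\sum_q q^2 g(\Psi(q)/q)$. This is where the two-sided bounds $r^{s_1} < g(r) < r^{s_2}$, together with $s_2 \leq 1$ and the compatibility $2 s_1 < 3 s_2$, intervene: they furnish the precise quantitative exchange rate between $g$ evaluated at $\sqrt{\Psi/q}$ and at $\Psi/q$ that the estimate demands.

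The principal obstacle is the small-discriminant irreducible regime. Each such polynomial produces a wide interval of length $\sqrt{\Psi/q}$, whose $g$-content $g(\sqrt{\Psi/q})$ dwarfs the generic value $g(\Psi/q)$; the saving must come from a careful count showing that only a fraction of order $\Psi(q)$ of height-$q$ polynomials has discriminant small enough to land in this regime. The quantitative condition $2 s_1 < 3 s_2$ is precisely what guarantees that the product of this sparse count with the inflated per-polynomial $g$-mass is still absorbed by the hypothesised convergent series $\sum_q q^2 g(\Psi(q)/q)$.
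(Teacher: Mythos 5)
Your overall strategy --- cover the limsup set via the Hausdorff--Cantelli lemma and control the $g$-cost of a cover by intervals, organised through the discriminant $D=a_1^2-4a_0a_2$ --- is the right one, and the dichotomy you describe is indeed the correct mechanism: the ``wide interval'' difficulty arises exactly when $|D|$ is small relative to $q\Psi(q)$. But the reducible/irreducible partition misplaces the hard case and leaves a genuine gap. In your reducible branch the per-interval estimate $\Psi(\|\mathbf{a}\|)/|a_2(p/r-\beta)|$ is void when $P_{\mathbf{a}}$ has a \emph{repeated} root, i.e.\ when $P_{\mathbf{a}}=a_2(x-u/v)^2$ so that $\beta=p/r$ and $D=0$. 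That degenerate locus is precisely where Hussain's proof uses the hypothesis $2s_1<3s_2$, as Section~\ref{theparabola} of the paper explains: each such $P$ produces a single interval of length $\asymp\sqrt{\Psi(q)/q}$, there are on the order of $q$ perfect-square integer quadratics of height $\asymp q$, and one must dominate $\sum_q q\,g\bigl(\sqrt{\Psi(q)/q}\bigr)$ by the hypothesised $\sum_q q^2\,g(\Psi(q)/q)$ --- which is exactly what \eqref{zzz} together with $2s_1<3s_2$ buys.

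Relatedly, your claim that ``only a fraction of order $\Psi(q)$ of height-$q$ polynomials has discriminant small enough'' is false as stated: the stratum $D=0$ already contains roughly $q$ of the $\asymp q^3$ triples of height $\asymp q$, a fraction $\asymp q^{-2}$, which in the regime where the theorem has content (fast-decaying $\Psi$) dwarfs $\Psi(q)$. Placing the entire burden of \eqref{zzz} on the irreducible small-$|D|$ regime and hoping a density count rescues you there therefore cannot work; the singular stratum $D=0$ must be treated as a separate case. If you isolate it and handle it by rewriting $|a_2|(x-u/v)^2<\Psi$ as the one-dimensional approximation condition $|x-u/v|<\sqrt{\Psi/|a_2|}$ --- as the paper in fact does in the proof of Theorem~\ref{thm2} --- and then invoke \eqref{zzz} to compare $g\bigl(\sqrt{\Psi(q)/q}\bigr)$ with $g(\Psi(q)/q)$, your argument would close up and match the repeated-root/distinct-root decomposition used in \cite{Hussain}. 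The remaining irreducible and distinct-rational-root cases are then handled, as you sketch, without appeal to \eqref{zzz}.
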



 Clearly one can see that Theorem  \ref{Hussainparabola}  is not as general as the convergence part of the GBSP for Hausdorff measure.
It is desirable to drop the condition \eqref{zzz}.
As an intermediate step, we introduce alternative conditions,
on the approximating function $\Psi$ and the dimension function
$g$, which allow the conclusion in \eqref{conresult}.
Our first result deals with a special class of approximating
functions of fast decay.

\begin{theorem} \label{thm2}
Let $\Psi$ be an approximating function such that for $q\geq q_0$, we have
\begin{equation} \label{bed}
\Psi(q^{2})\leq \Psi(q)^{2}.
\end{equation}
Then for any dimension function $g$ with the property that $r^{-1}g(r)\to \infty$ as $r\to 0$, and $r\mapsto r^{-1}g(r)$ decreasing, we have the conclusion
\eqref{conresult} of Theorem~\ref{Hussainparabola}. 
\end{theorem}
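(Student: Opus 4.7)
My approach is a dyadic covering argument combined with a case analysis based on the reducibility over $\Q$ of the approximating polynomial $P_{\mathbf{a}}(x):=a_{2}x^{2}+a_{1}x+a_{0}$. Writing
\[
A(\mathbf{a}):=\{x\in[0,1]:|P_{\mathbf{a}}(x)|<\Psi(\|\mathbf{a}\|)\},
\qquad \DD(\VV_2,\Psi)=\limsup_{\|\mathbf{a}\|\to\infty}A(\mathbf{a}),
\]
the Hausdorff--Cantelli lemma reduces the claim to showing that for every $q\in\N$ one can cover $\bigcup_{\|\mathbf{a}\|\asymp q}A(\mathbf{a})$ by balls of total $g$-mass $\lessless q^{2}\,g(\Psi(q)/q)$, after which the convergence hypothesis in \eqref{conresult} takes over.

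Near each real root $\alpha$ of $P_{\mathbf{a}}$, a first-order Taylor estimate places $A(\mathbf{a})$ inside an interval of radius $\lessless\Psi(\|\mathbf{a}\|)/|P'_{\mathbf{a}}(\alpha)|$, so the entire analysis is driven by the size of $|P'_{\mathbf{a}}(\alpha)|$. In the \emph{generic} regime $|P'_{\mathbf{a}}(\alpha)|\asymp\|\mathbf{a}\|\asymp q$ each such interval has length $\lessless\Psi(q)/q$; holding the pair $(a_{1},a_{2})$ fixed, the curve $a_{1}x+a_{2}x^{2}$ meets at most $\lessless q$ integer translates on $[0,1]$, producing $\lessless q$ intervals per pair. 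Summing the contribution $\lessless q\,g(\Psi(q)/q)$ over the $\asymp q$ admissible pairs $(a_{1},a_{2})$ with $\|(a_{1},a_{2})\|\asymp q$ yields exactly the target $q^{2}\,g(\Psi(q)/q)$.

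The resonant contribution --- i.e.\ the $\mathbf{a}$ for which the two roots of $P_{\mathbf{a}}$ lie close together --- splits further. In the \emph{irreducible} case the discriminant $\Delta(\mathbf{a})=a_{1}^{2}-4a_{0}a_{2}$ is a nonzero integer, so $|P'_{\mathbf{a}}(\alpha)|=\sqrt{|\Delta(\mathbf{a})|}\geq 1$; a standard Sprind\v zuk-type count of the $\mathbf{a}$ with $\|\mathbf{a}\|\asymp q$ and $|\Delta(\mathbf{a})|$ in a dyadic range, combined with the monotonicity of $r^{-1}g(r)$, interpolates the resulting $g$-masses back to the main scale $g(\Psi(q)/q)$. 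In the \emph{reducible} case, factor $P_{\mathbf{a}}=L_{1}L_{2}$ with $L_{i}(x)=p_{i}x-r_{i}$, $\gcd(p_{i},r_{i})=1$, say $p_{1}\leq p_{2}$. Distinctness of the rationals $r_{i}/p_{i}$ gives $|r_{1}/p_{1}-r_{2}/p_{2}|\geq 1/(p_{1}p_{2})$, bounding $L_{3-i}$ away from $0$ near $r_{i}/p_{i}$ and forcing $A(\mathbf{a})$ into two intervals of length $\lessless\Psi(\|\mathbf{a}\|)$. Since $\|\mathbf{a}\|\asymp p_{1}p_{2}\leq p_{2}^{2}$, the key hypothesis \eqref{bed} converts $\Psi(\|\mathbf{a}\|)$ into a quantity $\lessless\Psi(p_{2})^{2}$; summing the resulting $g$-mass over $(p_{1},r_{1},p_{2},r_{2})$ and using once more the monotonicity of $r^{-1}g(r)$ yields a Khintchine-type tail dominated by the assumed convergent series.

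\textbf{Main obstacle.} The delicate step is the resonant irreducible regime: the lower bound $|P'_{\mathbf{a}}(\alpha)|\geq 1$ is sharp on the rare polynomials with $|\Delta(\mathbf{a})|=O(1)$, and the resulting long intervals can only be absorbed after a careful interchange of the dyadic parameters $q$ and $D=|\Delta(\mathbf{a})|$ that leans on the monotonicity of $r^{-1}g(r)$. The hypothesis \eqref{bed} plays the parallel role in the reducible case by letting us trade $\Psi$ at scale $p_{1}p_{2}$ for $\Psi$ at scale $p_{2}$; together these two hypotheses take the place of the two-sided scale condition \eqref{zzz} in Theorem~\ref{Hussainparabola}.
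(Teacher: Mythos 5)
Your case analysis --- generic ($|P'_{\mathbf{a}}(\alpha)| \asymp q$), resonant irreducible (nonzero integer discriminant), resonant reducible with \emph{distinct} rationals --- omits exactly the case that carries the whole theorem: perfect squares $P(t) = a_2(t - u/v)^2$. These are resonant (the roots coincide) and reducible, but your step ``distinctness of the rationals gives $|r_1/p_1 - r_2/p_2| \ge 1/(p_1 p_2)$'' collapses for them, and nothing else in your plan addresses them. Yet this is precisely the only case in which Hussain's original proof required the two-sided condition \eqref{zzz}, and hence the only place where \eqref{bed} must intervene.

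The paper's proof does no new work on the distinct-root polynomials at all: it just observes that Hussain's argument already yields $\HH^g = 0$ for them under \eqref{conresult} alone, without \eqref{zzz}. The only new content is the perfect-square contribution, handled by a clean reduction to dimension one. If $P(t) = a_2(t - u/v)^2$ with $(u,v)=1$, then $v^2 \mid a_2$ forces $v \le \sqrt{H_P}$, and $|P(x)| \le \Psi(H_P)$ reads $|vx - u| \le \sqrt{\Psi(H_P)}$. Identifying $\sqrt{H_P}$ with a new parameter $q$ converts the perfect-square set into a one-dimensional $\limsup$ set for which the $n=1$ case of Theorem~\ref{JSDV} gives $\HH^g = 0$ as soon as $\sum_q g\bigl(\sqrt{\Psi(q^2)}/q\bigr) < \infty$. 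Hypothesis \eqref{bed} gives $\sqrt{\Psi(q^2)}/q \le \Psi(q)/q$, so this series is dominated term by term (since $g$ increases) by $\sum_q g(\Psi(q)/q)$, which is finite under \eqref{conresult}.

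Separately, your invocation of \eqref{bed} in the distinct-rationals subcase runs backwards: from ``$\|\mathbf{a}\| \asymp p_1 p_2 \le p_2^2$'' and $\Psi$ decreasing you get $\Psi(p_1 p_2) \ge \Psi(p_2^2)$, not the upper bound $\lessless \Psi(p_2)^2$ you claim. In fact the distinct-root contribution needs no hypothesis on $\Psi$ beyond monotonicity; the entire role of \eqref{bed} is to absorb the square root created by the perfect-square reduction above, which your plan never reaches.
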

%
Observe that there is an identity in \eqref{bed} for any
 function $\Psi: q\mapsto q^{-\tau}$. Moreover
the condition is easy to check for functions of exponential decay.
We want to explicitly highlight our result for these natural classes of approximating functions.

\begin{corollary} \label{cor1}
Let $\tau>0$. Let $\Psi$ be of the 
form $\Psi: q\mapsto q^{-\tau}$ or of the form
$\Psi: q\mapsto e^{-q\tau}$. Let $g$ be any dimension function such that $r^{-1}g(r)\to \infty$ as $r\to 0$ and $r\mapsto r^{-1}g(r)$ is decreasing. Then the conclusion \eqref{conresult} holds. 
\end{corollary}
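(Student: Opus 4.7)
The plan is immediate: Corollary~\ref{cor1} is a direct verification that the two families of approximation functions satisfy the hypothesis \eqref{bed} of Theorem~\ref{thm2}. Since the monotonicity of $r^{-1}g(r)$ is already an assumption of the corollary, it only remains to check \eqref{bed} for each family and then invoke Theorem~\ref{thm2}.

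For the polynomial decay $\Psi(q)=q^{-\tau}$, one simply computes
\begin{equation*}
\Psi(q^{2}) = (q^{2})^{-\tau} = q^{-2\tau} = (q^{-\tau})^{2} = \Psi(q)^{2},
\end{equation*}
so \eqref{bed} holds with equality for every $q\in\N$.

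For the exponential decay $\Psi(q)=e^{-q\tau}$ with $\tau>0$, we have
\begin{equation*}
\Psi(q^{2}) = e^{-q^{2}\tau}, \qquad \Psi(q)^{2} = e^{-2q\tau},
\end{equation*}
and the inequality $\Psi(q^{2})\leq \Psi(q)^{2}$ reduces to $q^{2}\geq 2q$, which is valid for every integer $q\geq 2$.

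The only minor technicality is the failure of \eqref{bed} at $q=1$ in the exponential case. This is harmless: either one observes that the proof of Theorem~\ref{thm2} uses \eqref{bed} only for $q$ sufficiently large (which is all that matters for the convergence of the series $\sum_{q}g(\Psi(q)/q)q^{2}$), or one redefines $\Psi(1)$ to be any value $\leq e^{-2\tau}$, which alters the sum by finitely many terms and hence preserves its convergence. In either case, Theorem~\ref{thm2} applies and delivers the conclusion \eqref{conresult}. There is no real obstacle here; the corollary is essentially a worked example illustrating the scope of Theorem~\ref{thm2}.
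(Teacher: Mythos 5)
Your proposal is correct and matches the paper's own (implicit) argument: the text preceding Corollary~\ref{cor1} observes that \eqref{bed} holds with equality for $\Psi(q)=q^{-\tau}$ and is easily checked for the exponential family, after which Theorem~\ref{thm2} applies directly. Your handling of the $q=1$ edge case is a correct, minor elaboration but adds no new ideas beyond the paper's route.
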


The proof of Theorem \ref{Hussainparabola} splits into two parts: the polynomial $P(x)=a_2x^2+a_1x+a_0$ having repeated roots or distinct roots. To be precise, identifying $P$ with its coefficient vector $\aa$,
the limsup set $\mathcal D(\VV_2, \Psi)$ can be covered by
\begin{align*}
\mathcal D(\VV_2, \Psi)=\mathcal D_{(1)}(\VV_2, \Psi)\cup \mathcal D_{(2)}(\VV_2, \Psi)
\end{align*}
where the index brackets are used to distinguish them from the sets
$D_n(\Psi)$ above and we have put
 \begin{equation*}\label{rr}
 \mathcal D_{(1)}(\VV_2, \Psi)=\left\{ \bigcap_{N\geq 1} \bigcup_{k\geq N}^\infty{}\bigcup_{2^k\leq\|\aa\|<2^{k+1}}\Delta(P): P {\rm \  has \ repeated \  roots}\right\},
\end{equation*}
$$ \mathcal D_{(2)}(\VV_2, \Psi)=\left\{ \bigcap_{N\geq 1}  \bigcup_{k\geq N}^\infty{}\bigcup_{2^k\leq\|\aa\|<2^{k+1}}\Delta(P): P {\rm \  has \ distinct \  roots}\right\},$$
and
\begin{equation*}
  \Delta(P)=\{x\in [0, 1): |P(x)|=|a_2x^2+a_1x+a_0|<\Psi(\|\aa\|)\}.
\end{equation*}

Hence, the desired statement that $\mathcal H^g(\mathcal D(\VV_2, \Psi))=0$ follows by establishing separately, $\mathcal H^g(\mathcal D_{(1)}(\VV_2, \Psi))=0$ and $\mathcal H^g(\mathcal D_{(2)}(\VV_2, \Psi))=0$.  In the paper, \cite[pp.55--57]{Hussain}, the proof of the distinct root case has been provided without assuming the assumption \eqref{zzz} on the dimension functions $g$.  Condition \eqref{zzz} is only needed for the repeated roots case (see the proof on \cite[p.54]{Hussain}). In the proof of Theorem \ref{thm2}, we provide a modified treatment (to the corresponding proof for the repeated root case \cite[p.54]{Hussain}) and as a consequence we obtain 
\begin{equation*}
\mathcal H^g(\mathcal D_{(1)}(\VV_2, \Psi))=0 \quad {\rm if} \quad \sum_{k=1}^{\infty} 2^{2k} g\left(\frac{\sqrt{\Psi(2^{2k})}}{2^{k}}\right)<\infty.
\end{equation*} 
%
%
%
On the other hand, the series in Theorem \ref{Hussainparabola}, arising from covering the set $\mathcal D_{(2)}(\VV_2, \Psi)$, is \[
\sum_{q=1}^{\infty} q^{2}g\left(\frac{\Psi(q)}{q}\right)\asymp \sum_{k=1}^{\infty} 2^{3k} g\left(\frac{\Psi(2^k)}{2^k}\right).\]

Hereby we use our assumption
that $\Psi$ is decreasing to see that the parameter change from $q$ to $k$
is justified, upon a suitable fixed factor. 
This leads us to study the 
convergence/divergence relations
between the two series
\begin{equation} \label{series1}
\sum_{k=1}^{\infty} 2^{3k} g\left(\frac{\Psi(2^k)}{2^k}\right)
\end{equation}
and
\begin{equation} \label{series3}
\sum_{k=1}^{\infty} 2^{2k} g\left(\frac{\sqrt{\Psi(2^{2k})}}{2^{k}}\right).
\end{equation}

%

We show a result somehow dual to Theorem~\ref{Hussainparabola}, where
we allow arbitrary dimension function $g$ at the cost
of a condition on $\Psi$ similar to \eqref{zzz}.

\begin{theorem} \label{1thm}
Let $s_{1},s_{2}$ be positive real numbers and suppose that
\[
q^{-s_2} \leq \Psi(q) \leq q^{-s_1}
\]
where $2s_2 \leq 3s_1 + 1$. Then, for any dimension function $g$, if \eqref{series1} converges, the series \eqref{series3} converges as well.
\end{theorem}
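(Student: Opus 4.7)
The plan is to derive convergence of \eqref{series3} from convergence of \eqref{series1} via a dyadic reindexing, using the two-sided power bound on $\Psi$ to reduce both series to comparisons of pure powers. Throughout I would write $\phi(k) := \Psi(2^{k})/2^{k}$; the hypothesis $q^{-s_{2}}\le \Psi(q)\le q^{-s_{1}}$ then yields the envelope
\[
2^{-k(s_{2}+1)} \;\leq\; \phi(k) \;\leq\; 2^{-k(s_{1}+1)}.
\]

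The first step uses monotonicity of $g$ to replace the arguments in both series by the endpoints of this envelope. The upper bound on $\phi(k)$ gives $g(\sqrt{\phi(k)}) \leq g(2^{-k(s_{1}+1)/2})$, so \eqref{series3} is dominated by $\sum_{k} 2^{k} g(2^{-k(s_{1}+1)/2})$; dually, the lower bound on $\phi(k)$ gives $g(2^{-k(s_{2}+1)}) \leq g(\phi(k))$, so the hypothesis of the theorem forces $\sum_{k} 2^{3k} g(2^{-k(s_{2}+1)}) < \infty$. It is therefore enough to establish the implication that convergence of $\sum_{k} 2^{3k} g(2^{-k(s_{2}+1)})$ forces convergence of $\sum_{k} 2^{k} g(2^{-k(s_{1}+1)/2})$.

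For the core comparison I would set $a := (s_{1}+1)/2$ and $b := s_{2}+1$, and for each $k$ define $j(k) := \lfloor ak/b\rfloor$. This choice guarantees $b\,j(k) \leq ak$, so $g(2^{-ak}) \leq g(2^{-b\,j(k)})$ by monotonicity. The preimage $\{k : j(k) = j\}$ consists of at most $\lceil b/a\rceil + 1$ consecutive integers, each satisfying $2^{k} \leq 2^{b(j+1)/a}$; thus
\[
\sum_{k \colon j(k) = j} 2^{k} g(2^{-ak}) \;\lessless\; 2^{bj/a}\, g(2^{-bj}).
\]
Summing in $j$ reduces matters to showing that $\sum_{j} 2^{bj/a} g(2^{-bj})$ is dominated by the convergent series $\sum_{j} 2^{3j} g(2^{-bj})$, which holds termwise as soon as $b/a \leq 3$, i.e.\ $2(s_{2}+1)\le 3(s_{1}+1)$; this is where the assumption relating $s_{1}$ and $s_{2}$ is consumed.

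The main (non-technical) obstacle is the bookkeeping in the dyadic grouping and calibrating the comparison parameter so that the exponent arithmetic matches the hypothesis of the theorem exactly. The conceptual content lies in extracting power envelopes for $\Psi$ that make the two series directly comparable; once these are in hand, $g$ enters only through monotonicity, in contrast to Theorem~\ref{Hussainparabola} where condition \eqref{zzz} on $g$ is essential.
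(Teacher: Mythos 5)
Your dyadic re\-indexing and the block-comparison with $A_{g}:=\sum_{k}2^{3k}g(2^{-k(s_{2}+1)})$ are exactly the strategy used in the paper. However, there is a genuine gap: the condition your argument actually needs is \emph{stronger} than the theorem's stated hypothesis, and you do not notice this. Your envelope for the argument of $g$ in \eqref{series3} is the correct one, namely
\[
\frac{\sqrt{\Psi(2^{k})}}{2^{k/2}} \;\leq\; 2^{-k(s_{1}+1)/2},
\]
so your $a=(s_{1}+1)/2$ and $b=s_{2}+1$ are right, and your grouping by $j(k)=\lfloor ak/b\rfloor$ closes as soon as $b/a\leq 3$. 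But $b/a\leq 3$ unwinds to
\[
2(s_{2}+1)\leq 3(s_{1}+1), \qquad\text{i.e.}\qquad s_{2}\leq \frac{3s_{1}+1}{2},
\]
which is strictly more restrictive than the theorem's $s_{2}\leq 3s_{1}+\tfrac12$ whenever $s_{1}>0$. For example, $(s_{1},s_{2})=(1,3)$ lies in the theorem's range but not in yours. So, as written, your proposal establishes the implication only on a proper subrange of the stated hypotheses and therefore does not prove Theorem~\ref{1thm} as formulated.

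It is worth saying where the mismatch comes from: the paper's own proof runs your same argument, but writes the envelope exponent as $\ell=s_{1}+\tfrac12$ rather than the correct $(s_{1}+1)/2$, i.e.\ it silently replaces $\sqrt{\Psi(2^{k})}/2^{k/2}\leq 2^{-k(s_{1}+1)/2}$ by the false inequality $\sqrt{\Psi(2^{k})}/2^{k/2}\leq 2^{-k(s_{1}+1/2)}$ (the latter fails already for $\Psi(q)=q^{-s_{1}}$, where the left side equals $2^{-k(s_{1}+1)/2}$). With that erroneous $\ell$, the block-comparison condition $s_{2}+1\leq 3\ell$ gives precisely $s_{2}\leq 3s_{1}+\tfrac12$, which is how the theorem's hypothesis was obtained. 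Your more careful computation exposes this: either the theorem's hypothesis should read $s_{2}\leq \tfrac{3s_{1}+1}{2}$, or the wider range requires a different argument, and you should explicitly flag the discrepancy rather than present the argument as proving the stated claim.
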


 On the other hand 
we have the following dichotomy.

\begin{theorem} \label{2thm}
There exist pairs of a (decreasing) approximating function $\Psi$
and a dimension function $g$ such that $r^{-1}g(r)\to \infty$ as $r\to 0$, and $r\mapsto r^{-1}g(r)$ is decreasing, and so that
\eqref{series1} converges but \eqref{series3}
diverges.
\end{theorem}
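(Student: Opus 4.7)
The plan is to exhibit a very sparse $\Psi$ together with a bespoke dimension function $g$, arranged so that at each scale where $\Psi$ is nonzero, the argument $\Psi(2^{k})/2^{k}$ appearing in \eqref{series1} is astronomically smaller than the argument $\sqrt{\Psi(2^{k})/2^{k}}$ appearing in \eqref{series3}. With such a separation of scales, there is enough room to force $g$ to be minuscule at the former and substantial at the latter, independently for each $j$.

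Concretely, I fix two rapidly growing integer sequences, for instance $k_{j}=2^{2j}$ and $\beta_{j}=2^{4j}$, which satisfy the spacing conditions $\beta_{j+1}>2\beta_{j}$ and $k_{j+1}>3k_{j}+j$ for all $j\geq 1$. I then define
\[
\Psi(2^{k_{j}})=2^{k_{j}-\beta_{j}},\qquad \Psi(2^{k})=0 \text{ for } k\notin\{k_{j}\}_{j\geq 1},
\]
and extend $\Psi$ to $\N$ in any way that keeps $\Psi(q)\to 0$ (automatic since $\beta_{j}-k_{j}\to\infty$). Next I prescribe $g$ at the relevant scales by
\[
g(2^{-\beta_{j}/2})=2^{-k_{j}},\qquad g(2^{-\beta_{j}})=2^{-3k_{j}-j},
\]
and extend $g$ to a continuous, non-decreasing function on $\R$ with $g(0)=0$, by piecewise linear interpolation between the prescribed points and by $g\equiv 0$ on $(-\infty,0]$.

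The spacing conditions on $(k_{j})$ and $(\beta_{j})$ are chosen precisely so that the prescribed abscissas $\cdots<2^{-\beta_{j+1}/2}<2^{-\beta_{j}}<2^{-\beta_{j}/2}<\cdots$ and the associated $g$-values form two strictly decreasing sequences along this order, so the monotone continuous extension is legitimate and $g$ is a genuine dimension function. Using $\sqrt{\Psi(2^{k_{j}})}/2^{k_{j}/2}=\sqrt{\Psi(2^{k_{j}})/2^{k_{j}}}=2^{-\beta_{j}/2}$, the only terms contributing to either series come from $k=k_{j}$, and they collapse to
\[
\sum_{k\geq 1}2^{3k}g(\Psi(2^{k})/2^{k})=\sum_{j\geq 1}2^{3k_{j}}\cdot 2^{-3k_{j}-j}=\sum_{j\geq 1}2^{-j}<\infty,
\]
\[
\sum_{k\geq 1}2^{k}g(\sqrt{\Psi(2^{k})}/2^{k/2})=\sum_{j\geq 1}2^{k_{j}}\cdot 2^{-k_{j}}=\infty.
\]
The only step requiring genuine care is verifying that the prescribed $g$-values decrease in the right order, which is exactly the role of the condition $k_{j+1}>3k_{j}+j$; every other inequality in the construction is comfortably satisfied by the super-exponential growth of $(\beta_{j})$, and the rest is bookkeeping.
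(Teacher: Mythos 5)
Your construction is the same in spirit as the paper's: pick a sparse sequence of scales, arrange $\Psi$ so that at each such scale the two abscissas $\Psi(2^{k})/2^{k}$ and $\sqrt{\Psi(2^{k})}/2^{k/2}$ are far apart, and then prescribe $g$ at those abscissas to force \eqref{series1} to converge while \eqref{series3} diverges. The arithmetic at $k=k_{j}$ is correct, and the bookkeeping on the interlacing of the abscissas and the prescribed $g$-values (ensured by $k_{j+1}>3k_{j}+j$ and $\beta_{j+1}>2\beta_{j}$) is fine.

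However there is a genuine gap in your choice of $\Psi$. You set $\Psi(2^{k})=0$ for every $k\notin\{k_{j}\}$, so $\Psi$ vanishes on almost all dyadic arguments; in particular $\Psi(2^{k_{j}})>0$ but $\Psi(2^{k_{j}+1})=0$, and then $\Psi(2^{k_{j+1}})>0$ again, so $\Psi$ is not monotone. The series \eqref{series1} and \eqref{series3} arise from the critical sum for the parabola precisely \emph{under the hypothesis that $\Psi$ decreases} (see the discussion between \eqref{series1} and \eqref{series3}); more importantly, Corollary~\ref{korola} — the actual point of Theorem~\ref{2thm} — applies the one-dimensional divergence case of Theorem~\ref{JSDV} to the derived function $q\mapsto\sqrt{\Psi(q^{2})}$, and that requires monotonicity. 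With your $\Psi$, the derived function is zero at almost every scale and the lower bound $\HH^{g}(\Theta)=\infty$ no longer follows. So while your pair $(\Psi,g)$ technically satisfies the literal words of Theorem~\ref{2thm}, it does not produce a usable example.

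The paper avoids this by taking $\Psi$ piecewise constant and genuinely decreasing, $\Psi(q)=Q_{n}^{1-2\beta}$ for $Q_{n-1}<q\leq Q_{n}$. The cost is that \eqref{series1} then has nonzero contributions from every $k$, and one must bound the partial sums $\sum_{k_{n-1}<k\leq k_{n}}2^{3k}g(\Psi(2^{k})/2^{k})\leq 2Q_{n}^{3}g(Q_{n}^{1-2\beta})$ using the monotonicity of $g$ and the explicit shape of $g$. Your construction can be repaired along the same lines: replace $\Psi(2^{k})=0$ for $k_{j-1}<k<k_{j}$ by the constant value $2^{k_{j}-\beta_{j}}$, and then estimate the extra terms in \eqref{series1}; this is exactly the extra work the paper carries out. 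A separate, smaller issue: for Corollary~\ref{korola} one also needs $r^{-1}g(r)$ non-increasing, and with $k_{j}=2^{2j}$, $\beta_{j}=2^{4j}$ the inequality $\beta_{j}/2>2k_{j}+j$ fails at $j=1$; this is harmless and is fixed by starting the sequences at $j=2$ or choosing slightly larger $\beta_{j}$, but it should be checked if the pair is to feed into the corollary.
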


\begin{corollary}\label{korola}
Let $g$ and $\Psi$ be as 
in Theorem~\ref{2thm}. Then,  we have
\[
\HH^g(\DD( \VV_2, \Psi)) = \infty.
\]
\end{corollary}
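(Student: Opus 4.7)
The plan is to exhibit an explicit subset $\tilde E\subseteq\DD(\VV_2,\Psi)$ with $\HH^g(\tilde E)=\infty$, arising from integer quadratics that are perfect squares of linear forms. For each coprime pair $(u,v)\in\N\times\N_{\ge 0}$ with $v\le u$, the polynomial $P_{u,v}(t)=(ut-v)^2=u^2t^2-2uvt+v^2$ has integer coefficient vector $(v^2,-2uv,u^2)$ of sup-norm $\|\mathbf a\|\asymp u^2$. Consequently, any $x\in[0,1]$ satisfying
\[
|P_{u,v}(x)|<\Psi(u^2)\;\Longleftrightarrow\;\left|x-\tfrac{v}{u}\right|<\frac{\sqrt{\Psi(u^2)}}{u}
\]
lies in $\DD(\VV_2,\Psi)$. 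Setting $\tilde\Psi(u):=\sqrt{\Psi(u^2)}/u$, this yields the inclusion $W(\tilde\Psi)\cap[0,1]\subseteq\DD(\VV_2,\Psi)$, so it suffices to show that $\HH^g(W(\tilde\Psi))=\infty$.

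I would then invoke the divergence half of Theorem~\ref{JSDV} in dimension $n=1$: under the standing monotonicity assumptions on $g$, together with monotonicity of $\tilde\Psi$ (inherited, up to constants, from that of $\Psi$), one has $\HH^g(W(\tilde\Psi))=\infty$ as soon as
\[
\sum_{u=1}^{\infty} u\cdot g\!\left(\frac{\tilde\Psi(u)}{u}\right)\;=\;\sum_{u=1}^{\infty} u\cdot g\!\left(\frac{\sqrt{\Psi(u^2)}}{u^2}\right)\;=\;\infty.
\]

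The main obstacle is to verify this divergence for the specific pair $(\Psi,g)$ constructed in the proof of Theorem~\ref{2thm}. By the conclusion of Theorem~\ref{2thm} we already know that \eqref{series3} diverges, but the two sums are not literally the same: after dyadic grouping, the Jarn\'ik--Besicovitch sum above evaluates $g$ at $\sqrt{\Psi(2^k)}/2^k$, whereas \eqref{series3} evaluates $g$ at $\sqrt{\Psi(2^k)}/2^{k/2}$, and these two scales are not comparable for a general dimension function. My approach is therefore to re-examine the construction of $(\Psi,g)$ in the proof of Theorem~\ref{2thm} and, exploiting the freedom to prescribe $g$ independently on the geometric sequences of scales appearing in each sum, arrange that both the Jarn\'ik--Besicovitch sum above and \eqref{series3} diverge while \eqref{series1} still converges. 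Combined with the inclusion $W(\tilde\Psi)\subseteq\DD(\VV_2,\Psi)$ and the one-dimensional case of Theorem~\ref{JSDV}, this sharpened construction then delivers $\HH^g(\DD(\VV_2,\Psi))=\infty$.
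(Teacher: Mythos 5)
Your overall strategy — realizing perfect squares $(ut-v)^2$ inside $\DD(\VV_2,\Psi)$ and reducing to the one-dimensional divergence case of Theorem~\ref{JSDV} — is exactly the route the paper takes. However, the ``main obstacle'' you identify is an artifact of a misapplication of Theorem~\ref{JSDV}, and the proposed ``sharpened construction'' is unnecessary.

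The error is in the normalization. You define $\tilde\Psi(u)=\sqrt{\Psi(u^2)}/u$ and claim that $\HH^g(W(\tilde\Psi))=\infty$ under the condition $\sum_u u\cdot g(\tilde\Psi(u)/u)=\infty$. But $W(\tilde\Psi)$, as defined in the introduction via $|x-v/u|<\tilde\Psi(u)$, is not the set that Theorem~\ref{JSDV} controls directly. Theorem~\ref{JSDV} ($n=1$) is about $\DD_1(\Phi)=\{x:|ux-v|<\Phi(u)\ \text{i.m.}\}$, whose divergence condition is $\sum_u u\cdot g(\Phi(u)/u)=\infty$. Rewriting $|x-v/u|<\tilde\Psi(u)$ as $|ux-v|<u\tilde\Psi(u)$ shows $W(\tilde\Psi)=\DD_1(\Phi)$ with $\Phi(u)=u\tilde\Psi(u)=\sqrt{\Psi(u^2)}$, so the correct condition is
\[
\sum_{u\geq 1} u\cdot g\!\left(\frac{\Phi(u)}{u}\right)=\sum_{u\geq 1} u\cdot g\!\left(\frac{\sqrt{\Psi(u^2)}}{u}\right)=\infty,
\]
with a single power of $u$ in the denominator, not $u^2$. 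After dyadic grouping $u\asymp 2^j$ and substituting $k=2j$, this is precisely the even-index subsum of the series \eqref{series3}, so there is no mismatch to repair. (The construction of Theorem~\ref{2thm} places the divergent contributions at $k=k_n$ with $k_n/k_{n-1}>3$, and these can all be chosen even, so the even-index subsum indeed diverges.) A minor further caveat: you should restrict $x$ to $[0,1/2)$, say, to ensure $v<u/2$ so that $\|\mathbf a\|=\max\{2uv,u^2\}=u^2$ exactly and the monotone $\Psi$ really bounds $\Psi(\|\mathbf a\|)$ from below by $\Psi(u^2)$. With these corrections your argument closes and is the paper's proof.
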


\subsection{General Veronese curves and irreducibility} \label{gvc}
As explained above, in the proof of Theorem~\ref{Hussainparabola}, the restriction \eqref{zzz} was only required for the
class of reducible polynomials. The proof of Theorem~\ref{thm2} relies on the condition \eqref{bed} on $\Psi$ which can be regarded as a replacement for the condition \eqref{zzz} on $g$ in Theorem \ref{Hussainparabola}.

We now consider the Veronese curve in
arbitrary dimension $n$.
In view of the above observation, one may expect 
that, in general, the reducibility of polynomials 
might cause problems. However,
generalizing the ideas of Theorem~\ref{thm2}, our new results
(Theorem~\ref{thm4} and Theorem~\ref{abc}) below suggest that reducible
polynomials should not play an important role
upon some
strengthened version of \eqref{bed} on the approximating 
function $\Psi$. 

 We denote by 
$H_{P}=\max_{0\leq i\leq n} \vert a_{i}\vert$ the height of the polynomial
$P(t)=a_{n}t^{n}+\cdots+a_{1}t+a_{0}$.  The careful reader may observe that,  strictly speaking, this
treats a slightly altered problem than introduced 
in Section~\ref{intro}, where the approximating function only
depends on $n$ instead of $n+1$ elements (not on $a_{0}$). 
However, for fixed $t=\xi$,  by triangle inequality $|a_{0}|\lessless_{n,\xi} \max\{|a_{1}|,\ldots,|a_{n}|\}$ if $|P(\xi)|$ is small, and the results of this section can be verified similarly in the classical setting as well.

\begin{theorem}\label{thm4}
	Let $n\geq 1$ be an integer.
Let $\Psi$ be an approximating function with the property that for 
large $q>q_0$
the derived function 
\begin{equation}  \label{tauu}
q\longmapsto \frac{\log \Psi(q)}{\log q}
\end{equation}
is monotonically nonincreasing.
Let $x$ be a real number not algebraic of degree at most $n$.
Suppose that the set of polynomials
\begin{equation*} \label{ihredu}
Z_{\Psi}^{\prime}(x):= \{ P(t)= a_{n}t^{n}+\cdots+a_{0}\in \Z[t]:\; \vert P(x)\vert\leq \Psi(H_{P})\}
\end{equation*}
is infinite.
Assume that for every $c_{1}>1$ there
exists $c_{2}>0$ such that
\begin{equation} \label{gleichung}
\Psi(q/c_{1})\leq c_{2}\Psi(q), \qquad\qquad q\geq q_{0}(c_{1}).
\end{equation}
Then there exists an explicitly computable constant $C=C(n,x)>0$ and
infinitely many irreducible integer polynomials $Q$
of degree at most $n$ with the property
\begin{equation} \label{eqq}
\vert Q(x)\vert\leq C\Psi(H_{Q}).
\end{equation}
These polynomials $Q$ can be chosen among divisors of
polynomials in $Z_{\Psi}^{\prime}(x)$.
\end{theorem}

\begin{remark}  \label{R1}
	In the arXiv version (first and second version) of this paper~\cite{arXiv}, we additionally 
	show the following claim that admits a similar but 
	slightly more technical proof.
	Assume there are infinitely many 
	polynomials $P\in Z_{\Psi}^{\prime}(x)$
	with the property that when factorized into irreducible polynomials
	\[
	P(t)=Q_{1}(t)Q_{2}(t)\cdots Q_{k}(t), \qquad \qquad k=k(P)\leq n,
	\]
	then there is no index $i\in\{1,2,\ldots,k\}$ so that
	\begin{equation} \label{gl22}
	H_{Q_{i}}> H_{P}.
	\end{equation}
	Then, for any $\epsilon>0$,
	there exist infinitely many irreducible integer polynomials $Q$ (among
	the $Q_{i}$ above) such that the estimate
	\begin{equation}  \label{gl3}
	\vert Q(x)\vert\leq \Psi(H_{Q})^{1-\epsilon}
	\end{equation}
	holds. Moreover, if an index $i$ satisfies \eqref{gl22} then
	for the codivisor polynomial we have
	\begin{equation} \label{eq:mala}
	H_{P/Q_{i}}\leq \binom{n}{\left\lfloor n/2\right\rfloor}\sqrt{n+1}.
	\end{equation}
	Hence \eqref{gl3} is in particular true if 
	the following property $(\ast)$ holds:
	\smallskip
		
		\noindent{\bf Property ($\ast$).} Only finitely many $P\in Z_{\Psi}^{\prime}(x)$
		have a fixed, non-constant divisor polynomial. 
		\smallskip
		
		This property further implies
		that $C=C(n)$ can be chosen independently from $x$ in 
		Theorem~\ref{thm4}. 
%
	\end{remark}

\begin{remark}  
	The assumption on $x$ is necessary. If $x$ is algebraic of degree at most 
	$n$ and	we take $\Psi(q)=q^{-n}$ (or
	any other $\Psi(q)=o(q^{-n+1})$), a variant of 
	Liouville's inequality~\cite[Theorem~A1]{Bugeaud} implies that 
	$Z_{\Psi}^{\prime}(x)$ consists essentially only
	of the infinite set
	of polynomial multiples of  
	its minimal polynomial not exceeding degree $n$ (including scalar multiples),  up to finitely many exceptions.
	Thus for any $C>0$ there are only
	finitely many solutions to \eqref{eqq} in irreducible polynomials.  
\end{remark}

\begin{remark}
	Notice that in the nontrivial case
	when the polynomials $P$ are reducible, then the derived
	$Q$, in fact, have degree at most $n-1$. Among such polynomials we
	should generically expect a significantly weaker order of approximation
	than for polynomials of degree $n$. 
\end{remark}

Condition \eqref{gleichung} controls the decay of $\Psi$ within short intervals 
(on a logarithmic scale). Without loss of generality we may only impose
\eqref{gleichung} for $c_{1}=2$. Write $c_2=c_2(c_1)$.
Indeed, since $\Psi$ decreases, if
$K\geq 1$ is the integer so that $2^{K-1}<c_1\leq 2^{K}$, 
then repeated application with $c_1=2$ 
shows that we may take $c_2(c_1)= c_2(2)^{K}$.  
\smallskip

Theorem~\ref{thm4} tells us that, for such $\Psi$,
we may restrict to $\Psi$-approximable
{\em irreducible} polynomials, at the cost of a minor 
modification of the involved 
approximating function $\Psi$. The twist by a constant $C$ 
as in \eqref{eqq} is probably the best we can hope for.
For a wide class of dimension functions $g$, the convergence of the series \eqref{conresult} will remain unaffected upon the replacement of $\Psi$ by $C\Psi$
as in \eqref{eqq}. In particular this is true when $r\mapsto r^{-1}g(r)$ is decreasing as in
the hypothesis of the GBSP. See also the derived metric result Theorem~\ref{abc} and its proof below.
 For the purpose of this result we only require the first claim of Theorem~\ref{thm4},
the second is added more for sake of completeness.
The conditions \eqref{tauu}, \eqref{gleichung}
apply to power  functions $\Psi: q\mapsto q^{-\tau}$ (with
$c_{2}= c_{1}^{\tau}$ in \eqref{gleichung}), 
as well as to several 
variations. An interesting class of functions
is provided in the next corollary. 


\begin{corollary} \label{obn}
Let $N\geq 0$ be an integer and let
		\[
		\Psi(q)= q^{-\tau}\cdot \varphi_{1}(q)\varphi_{2}(q)\cdots \varphi_{N}(q)
		\]
		 where $\tau>0$ and $\varphi_{i}:(0,\infty)\to (0,\infty)$, $1\leq i\leq N$, are any functions that satisfy
	\begin{itemize}
	 \item $\varphi_{i}(q)>1$ for $q\geq q_{0}$
	 \item either $\varphi_{i}$ is decreasing for $q\geq q_{0}$, or 
	  $\varphi_{i}$ is increasing and differentiable for $q\geq q_{0}$ and \[
	  \frac{d}{dq}\log(\varphi_{i}(q))=o((q\log q)^{-1}),\qquad q\to\infty.
	  \]
	\end{itemize}
	Then \eqref{tauu} and \eqref{gleichung}
	are both satisfied and consequently the first claim of Theorem~\ref{thm4} holds. 
\end{corollary}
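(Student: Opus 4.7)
The plan is to verify the two hypotheses \eqref{tauu} and \eqref{gleichung} of Theorem~\ref{thm4} for this concrete family, after which the first bullet of that theorem applies directly.

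For \eqref{tauu}, I take logarithms to write
\[
\frac{\log \Psi(q)}{\log q} \;=\; -\tau \,+\, \sum_{i=1}^{N} \frac{\log \varphi_i(q)}{\log q}.
\]
Since the constant $-\tau$ plays no role in monotonicity, it suffices to show that each summand $f_i(q) := \log\varphi_i(q)/\log q$ is eventually non-increasing. In the decreasing case this is immediate: $\log\varphi_i$ is positive and decreasing while $\log q$ is positive and increasing, so $f_i$ is decreasing with no further work. In the increasing case, I would differentiate to obtain
\[
f_i'(q) \;=\; \frac{(\log q)\,(\log\varphi_i)'(q) \;-\; \log\varphi_i(q)/q}{(\log q)^2},
\]
and bound the numerator. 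The hypothesis $(\log\varphi_i)'(q) = o((q\log q)^{-1})$ gives $(\log q)(\log\varphi_i)'(q) = o(1/q)$, whereas monotonicity and the strict inequality $\varphi_i(q_0)>1$ together yield $\log\varphi_i(q)\geq \log\varphi_i(q_0)=: c > 0$, hence $\log\varphi_i(q)/q \geq c/q$. For $q$ sufficiently large the second term therefore dominates the first and $f_i'(q)\leq 0$, as required.

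For \eqref{gleichung} I would factorise
\[
\frac{\Psi(q/c_1)}{\Psi(q)} \;=\; c_1^{\tau}\cdot\prod_{i=1}^{N}\frac{\varphi_i(q/c_1)}{\varphi_i(q)}
\]
and bound each factor uniformly in $q\geq c_1 q_0$. If $\varphi_i$ is increasing, the factor is at most $1$ since $q/c_1<q$. If $\varphi_i$ is decreasing and bounded below by $1$, then $\varphi_i(q/c_1)\leq \varphi_i(q_0)$ while $\varphi_i(q)\geq 1$, so the factor is bounded by $\varphi_i(q_0)$. Setting $c_2 := c_1^{\tau}\prod_{i}\max\{1,\varphi_i(q_0)\}$ then verifies \eqref{gleichung}.

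The only genuinely delicate point is the derivative estimate in the increasing case, where the precise little-$o$ growth hypothesis on $(\log\varphi_i)'$ is what makes the competition in the numerator of $f_i'$ go the right way; the decreasing case and condition \eqref{gleichung} both reduce to routine monotonicity and size comparisons. Once both \eqref{tauu} and \eqref{gleichung} are in place, the first bullet of Theorem~\ref{thm4} applies directly and delivers the corollary.
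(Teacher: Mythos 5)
Your proof is correct and follows essentially the same route as the paper's: verify monotonicity of $\log\varphi_i(q)/\log q$ separately in the decreasing and increasing cases (the increasing case via the same differentiation, using that $\log\varphi_i$ is bounded away from $0$ while $(\log\varphi_i)'$ is $o((q\log q)^{-1})$), then check \eqref{gleichung} factor-by-factor with $c_2$ the product of the per-factor bounds. The only cosmetic difference is your explicit choice $c_2 = c_1^\tau\prod_i\max\{1,\varphi_i(q_0)\}$, where the paper settles for $c_2 = 2$ per decreasing factor via a limiting argument; both are valid.
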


If $\varphi=\varphi_{i}$ increases,
the last condition of the corollary means, roughly speaking, that
it grows slower than the logarithm. 
The second condition can be slightly relaxed upon assuming that
$\varphi_i$ increases to infinity. Then the assumption
$\frac{d}{dq}\log(\varphi_{i}(q))=O((q\log q)^{-1})$ with an arbitrary implied constant suffices.
In case $\varphi_i$ increases to some finite limit $L\geq 1$, then 
suitable constants may depend on $L$.

The next example contains another class
of functions applicable for Theorem~\ref{thm4},
although they do not necessarily satisfy the assumptions of Corollary~\ref{obn}.

\begin{example} \label{exa}
Let $\Psi$ be a function given for large $q\geq q_{0}$ 
 as
\[
\Psi(q)=Aq^{-\tau}(\log q)^{a}(\log \log q)^{b} (1+e^{-cq}),
\]
for $A>0, \tau>0$ and
$a\geq 0,b\in\mathbb{R}, c\geq 0$, where we require 
$b\geq 0$ if $a=0$. Then $\Psi$ satisfies the conditions 
\eqref{tauu} and \eqref{gleichung} of Theorem~\ref{thm4}.
\end{example}
Unfortunately, condition \eqref{gleichung} no
longer applies to functions with rapid decay, 
such as $\Psi(q)=e^{-\kappa q}$ for $\kappa>0$.
The biased splitting condition \eqref{gl22} in the second claim 
enters for technical reasons,
and we strongly believe it is not needed for the conclusion
of \eqref{gl3}. It appears to be very weak, however we are unable to get rid
of it at present. 
The condition \eqref{gl22} is particularly satisfied if $P$ is a 
power of an irreducible polynomial, which was the case that caused most problems
in the proof of Theorem~\ref{Hussainparabola}. Also it holds when there
exists no polynomial $Q(t)$ as in the problem below.

\begin{problem}
	Is there an integer polynomial $Q(t)$ so that for infinitely many 
	integer polynomials $R(t)$ of bounded degree $\leq n$
	with coprime coefficient vector\footnote{By the coprime coefficient vector we mean that the gcd of coefficients is $1$.},
	we have $H_{RQ}<H_{R}$? 
	\end{problem}


 Unfortunately, the answer is positive when not restricting the degree of $R$,
as we may take $Q$ a divisor of a cyclotomic polynomial
of height greater than $1$, for the existence see for 
example~\cite{Moree}.

We present a variant of Theorem~\ref{thm4} where the problems
regarding the implied constants mentioned in the paragraph above Corollary~\ref{obn} can be avoided, using the multiplicative Mahler measure instead of the standard height. In particular in this setting condition
\eqref{gleichung} is no longer needed.
We recall that for a complex polynomial $P(t)=a\prod (t-\alpha_{i})$, its Mahler measure $M_{P}$ is given as  $M_{P}=|a| \prod \max\{ 1,|\alpha_{i}|\}$.

\begin{theorem} \label{weil} Let $x$ be real, not algebraic of degree at most $n$. Let $\Psi$ be an approximating function that satisfies \eqref{tauu}.
	Then assume that the set
	\[
	\mathscr{Z}_{\Psi}^{\prime}(x):= \{ P(t)= a_{n}t^{n}+\cdots+a_{0}\in \Z[t]:\; \vert P(x)\vert\leq \Psi(M_{P})\}
	\] 
	is infinite. Then there exists an explicitly computable 
	constant $C=C(n,x)>0$ and
	infinitely many irreducible integer polynomials $Q$
	of degree at most $n$ with the property
	\[
	\vert Q(x)\vert\leq C\Psi(M_{Q}).
	\]
	If $\mathscr{Z}_{\Psi}^{\prime}(x)$ satisfies property $(\ast)$
	from Remark~\ref{R1}, we may let $C=1$.
\end{theorem}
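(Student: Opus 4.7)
The strategy is to exploit the exact multiplicativity $M_{PQ}=M_P M_Q$ of the Mahler measure, which bypasses the Gelfond-type loss factor that forced the restrictive condition \eqref{gl22} in Theorem~\ref{thm4}. The first step is to derive from \eqref{tauu} that $\Psi$ is (eventually) non-increasing and submultiplicative, in the sense that $\Psi(ab)\leq \Psi(a)\Psi(b)$ whenever $a,b$ are sufficiently large. Indeed, writing $\tau(q)=-\log\Psi(q)/\log q$, the monotonicity of $\tau$ yields $\tau(ab)\log(ab) \geq \tau(a)\log a + \tau(b)\log b$, which rearranges to submultiplicativity. After replacing each $P\in\mathscr{B}_\Psi'(x)$ by its primitive part (the content only tightens the inequality since $\Psi$ is non-increasing), factor $P = \pm Q_1\cdots Q_k$ into primitive irreducible polynomials with multiplicity. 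Exact multiplicativity of the Mahler measure gives $M_P = \prod M_{Q_i}$, and combined with submultiplicativity we obtain
\[
\prod_{i=1}^{k} |Q_i(x)| \;=\; |P(x)| \;\leq\; \Psi(M_P) \;\leq\; \prod_{i=1}^{k}\Psi(M_{Q_i}),
\]
so $\prod_i |Q_i(x)|/\Psi(M_{Q_i}) \leq 1$ and at least one irreducible factor $Q_{i_0}$ of $P$ satisfies $|Q_{i_0}(x)|\leq \Psi(M_{Q_{i_0}})$.

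This already handles the property-$S$ case with $C=1$. Let $\mathcal{I}^*$ denote the set of primitive irreducible $Q\in\Z[t]$ of degree $\leq n$ with $|Q(x)|\leq \Psi(M_Q)$; each such $Q$ belongs to $\mathscr{B}_\Psi'(x)$. By the argument above, every $P\in\mathscr{B}_\Psi'(x)$ has an irreducible divisor in $\mathcal{I}^*$. If $\mathscr{B}_\Psi'(x)$ satisfies property $S$, then $\mathcal{I}^*$ cannot be finite, and the infinitely many $Q\in\mathcal{I}^*$ witness the conclusion with $C=1$.

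For the general case where property $S$ need not hold, I argue by iterative peeling. Assume $\mathcal{I}^*$ is finite and set $\delta_0=\min_{S\in\mathcal{I}^*}|S(x)|>0$, positive because $x$ is not algebraic of degree $\leq n$ and each $S$ is a non-constant irreducible of degree $\leq n$. For primitive $P\in\mathscr{B}_\Psi'(x)$, write $P=\bigl(\prod_i S_i^{a_i}\bigr)R$ where $S_i\in\mathcal{I}^*$ and $R$ is primitive with no irreducible factor in $\mathcal{I}^*$. Then $\sum a_i\leq n$ and $M_R\leq M_P$, so
\[
|R(x)| \;\leq\; \Psi(M_P)\cdot\max(1,\delta_0^{-n}) \;\leq\; C_0\,\Psi(M_R), \qquad C_0:=\max(1,\delta_0^{-n}),
\]
using $\Psi$ non-increasing. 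If $R$ is a unit, then $P$ is a product of elements of $\mathcal{I}^*$, and there are only finitely many such $P$ of degree $\leq n$; so attention may be restricted to those $P$ for which $R$ is non-trivial. Applying the factorization inequality of the first paragraph to $R$ produces an irreducible factor $T$ of $R$ with $|T(x)|\leq C_0\,\Psi(M_T)$ and $T\notin\mathcal{I}^*$. If the collection of such $T$'s is infinite we are done with $C=C_0$; otherwise adjoin these finitely many $T$'s to $\mathcal{I}^*$ and repeat the process. Each stage strictly reduces the number of distinct irreducible factors of the residue, so the iteration terminates after at most $n$ rounds: at some stage $k^*\leq n$ either a new infinite family of irreducibles appears, yielding the conclusion with an explicit $C=C(n,x)$ built from the accumulated constants $\delta_k^{-n}$, or every $P\in\mathscr{B}_\Psi'(x)$ decomposes entirely over a fixed finite set of irreducibles, which is impossible since such $P$ of degree $\leq n$ are finite in number while $\mathscr{B}_\Psi'(x)$ is assumed infinite.

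The cleanest point of the argument is the property-$S$ case, which follows in one line from multiplicative factorization. The main obstacle is the bookkeeping in the iteration for the general case: at each stage one must check that submultiplicativity of $\Psi$ still applies (valid once $M_R$ is large, with small peeled factors absorbed into the constant), control how the constants $\delta_k^{-n}$ accumulate across the at-most-$n$ stages, and deal with the finitely many primitive irreducibles of Mahler measure equal to $1$ (cyclotomic polynomials and $\pm t^k$, by Kronecker), which can be relegated to the finite exceptional set. The conceptual gain over Theorem~\ref{thm4} is that the loss factor $\binom{n}{\lfloor n/2\rfloor}\sqrt{n+1}$ arising from Gelfond's inequality disappears entirely, which is precisely what enables the sharp constant $C=1$ under property $S$.
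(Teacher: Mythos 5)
Your proof is correct and reaches the same conclusion, but takes a genuinely different organisational route. The paper's own proof is a one-line reference to the proof of Theorem~\ref{thm4}, noting that exact multiplicativity of the Mahler measure lets one take $c_1=c_2=1$ (eliminating the need for condition \eqref{gleichung} --- not \eqref{gl22}, which only concerns the second claim) and yields $C=\max\{1,1/D_x\}$, equal to $1$ under property $S$. That argument first fixes the factorization pattern $k$ by pigeonhole, then chooses $l$ maximal so that along a subsequence the first $l$ irreducible factors stay bounded while the remaining $k-l$ all have heights tending to infinity, and finally applies a weighted pigeonhole on exponents $g_i,h_i$ to produce an unbounded family of good irreducibles in a single pass, with no iteration. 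You instead observe directly that \eqref{tauu} makes $\Psi$ submultiplicative, combine this with $M_P=\prod_i M_{Q_i}$ to obtain $\prod_i |Q_i(x)|/\Psi(M_{Q_i})\leq 1$, pigeonhole on the product, and iterate whenever the resulting good irreducibles form only a finite set, adjoining them to the exceptional set; since the degree of the residue strictly drops each round, the process terminates in at most $n$ rounds. Both routes hinge on the same two facts --- exact multiplicativity of $M$ and the submultiplicativity of $\Psi$ encoded by \eqref{tauu} --- with your iterative peeling replacing the paper's maximal-$l$ subsequence device; your version is equally sound but needs a little more bookkeeping in the non-property-$S$ case to track how the constants $\delta_k^{-n}$ accumulate. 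Two small corrections to the write-up: the exceptional set must absorb all primitive irreducibles whose Mahler measure lies below the threshold beyond which the monotonicity in \eqref{tauu} holds (still a finite set), not merely those with $M_Q=1$; and the termination invariant is more accurately phrased in terms of the degree (or factor count with multiplicity) of the residue rather than its number of distinct irreducible factors, since that is what strictly decreases.
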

 
 Notice that $M_{P}\asymp_{n} H_{P}$, see~\cite{Mahler3} (or the 
 proof of Theorem~\ref{thm4} below).
 While we believe \eqref{gleichung} resp. \eqref{gl22} are not required for the respective claims of Theorem~\ref{thm4} either, the assumption \eqref{tauu} seems to be crucial in both
Theorem~\ref{thm4} and Theorem~\ref{weil}.
The next theorem illustrates that we cannot drop both \eqref{tauu} and \eqref{gleichung} 
completely. 

\begin{theorem} \label{bugresu}
  Let $n\geq 2$ be an integer. There exists a transcendental
  real number $x$, a nonincreasing approximating function $\Psi(q)$ that
  tends to $0$ as $q\to\infty$ and a positive constant $\Delta(n)$
  with the following properties.
  \begin{itemize} 
  	\item The set of integer polynomials 
  	\[
  	Z_{\Psi}^{\prime}(x)=\{  P(t)= a_{n}t^{n}+\cdots+a_{0}\in \Z[t]:\; \vert P(x)\vert\leq \Psi(H_{P})\}
  	\]
  	%
  	is infinite.
  \item For any irreducible integer polynomial $Q(t)$ of 
  degree at most $n$ the estimate
  \[
  \vert Q(x)\vert > \Delta(n)\cdot \Psi(H_{Q})^{1/n}
  \]
  holds. In particular, \eqref{gl3} fails for any $\epsilon<1/2$
  (consequently \eqref{eqq} fails as well).
\end{itemize}
\end{theorem}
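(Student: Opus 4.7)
The plan is to construct $x$ as a Liouville-type transcendental, very well approximated by a specific sequence of rationals $p_{k}/q_{k}$, so that the reducible polynomials $P_{k}(t):=(q_{k}t-p_{k})^{n}$ populate $B_{\Psi}^{\prime}(x)$, while the distinguished linear factor $q_{k}t-p_{k}$ only saturates, but never strictly beats, the $\Psi^{1/n}$-bound.

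Fix a rapidly increasing integer sequence $q_{1}<q_{2}<\cdots$ with $q_{k+1}\geq q_{k}^{n+1}$ and put $V_{k}:=q_{k}^{-s_{0}}$ for a constant $s_{0}=s_{0}(n)$ to be fixed large enough. Define the non-increasing step function $\Psi(H):=V_{k}$ for $H\in[q_{k},q_{k+1})$; it tends to $0$ as $H\to\infty$. Construct $x$ as the intersection of a nested sequence of closed intervals $I_{k}\subset I_{k-1}$, where $I_{k}$ is a sub-interval of the annular region $\{y:\tfrac{1}{4}V_{k}^{1/n}/q_{k}\leq |y-p_{k}/q_{k}|\leq V_{k}^{1/n}/q_{k}\}$ around some rational $p_{k}/q_{k}$ (with $\gcd(p_{k},q_{k})=1$ and $p_{k}/q_{k}$ close to $I_{k-1}$), so that $|I_{k}|$ is of order $V_{k}^{1/n}/q_{k}$. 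At each step $I_{k}$ is further thinned to also avoid the sets $\{y:|Q(y)|\leq \Delta\Psi(H_{Q})^{1/n}\}$ for every irreducible integer polynomial $Q$ of degree at most $n$ and height less than $q_{k+1}$, apart from the single polynomial $q_{k}t-p_{k}$ itself. A Borel--Cantelli-style measure count, described below, shows that with $s_{0}$ large enough the total excluded measure at each step is $o(|I_{k}|)$, so the inductive construction can be carried out.

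The resulting $x$ is transcendental by Roth's theorem, since $|x-p_{k}/q_{k}|\leq q_{k}^{-(s_{0}/n+1)}$ with exponent exceeding $2$ once $s_{0}>n$. The reducible polynomial $P_{k}(t)=(q_{k}t-p_{k})^{n}$ has height $H_{P_{k}}$ of order $q_{k}^{n}$ (with implicit constant depending only on $n$), which lies in $[q_{k},q_{k+1})$, whence $\Psi(H_{P_{k}})=V_{k}$ and
\[
|P_{k}(x)|=q_{k}^{n}|x-p_{k}/q_{k}|^{n}\leq q_{k}^{n}(V_{k}^{1/n}/q_{k})^{n}=V_{k}=\Psi(H_{P_{k}}),
\]
producing infinitely many elements of $B_{\Psi}^{\prime}(x)$. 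For the distinguished linear factor $Q_{k}:=q_{k}t-p_{k}$, the lower bound on $|x-p_{k}/q_{k}|$ gives $|Q_{k}(x)|=q_{k}|x-p_{k}/q_{k}|\geq \tfrac{1}{4}V_{k}^{1/n}=\tfrac{1}{4}\Psi(H_{Q_{k}})^{1/n}$; for every other irreducible $Q$ of degree at most $n$, the inductive exclusion forces $|Q(x)|>\Delta\Psi(H_{Q})^{1/n}$ with some $\Delta=\Delta(n)>0$ (shrinking $\Delta$ if necessary to absorb the finitely many small-height exceptions not controlled by the exclusion process).

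The main obstacle is the measure-theoretic step: one must show that the Lebesgue measure of all root-neighborhoods to be excluded at stage $k$ is strictly smaller than $|I_{k}|$. There are of order $H^{n+1}$ irreducible polynomials of height $H$ and degree at most $n$, each contributing an excluded set of measure controlled by a power of $\Psi(H)^{1/n}$ (with a discriminant-dependent factor), and the rapid growth $q_{k+1}\geq q_{k}^{n+1}$ against the decay $V_{k}=q_{k}^{-s_{0}}$ must be balanced so that the summed excluded measure converges; quantitatively, $s_{0}=s_{0}(n)$ of polynomial order in $n$ suffices. The construction is a variant of standard Bugeaud-type arguments on the spectrum of approximation exponents $w_{n}$ and $w_{n}^{*}$.
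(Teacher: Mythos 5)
Your strategy is genuinely different from the paper's, and the main step you acknowledge as ``the main obstacle'' is in fact a serious gap that cannot be dismissed with a ``Borel--Cantelli-style measure count.''

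The paper sidesteps all measure-theoretic exclusion entirely. It takes a specific $x=\xi_{w}$ (from Bugeaud's earlier construction) whose continued fraction denominators satisfy $q_{l}\asymp q_{l-1}^{w}$ for $w>n(2n-1)$, sets $R_{l}(t)=q_{l}t-p_{l}$ and $\Psi$ as a step function so that $R_{l}^{n}\in B_{\Psi}^{\prime}(x)$, and then handles \emph{every} irreducible $Q$ not equal to some $R_{l}$ in one stroke using the Davenport--Schmidt/Liouville inequality \eqref{eq:buschlei}: since $Q$ is coprime to $R_{l}^{n}$ and $|R_{l}^{n}(\xi_{w})|$ is far below the threshold $cH^{-(2n-1)}$, the inequality forces $|Q(\xi_{w})|\geq cH_{R_{l}^{n}}^{-(2n-1)}=cq_{l}^{-n(2n-1)}$, which beats $\Psi(H_{Q})^{1/n}\asymp q_{l}^{-w}$ precisely because $w>n(2n-1)$. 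This is a \emph{pointwise} argument; no covering estimate is ever needed.

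By contrast, your nested-interval exclusion scheme must prove that, inside the tiny interval $I_{k}$ of length $\asymp q_{k}^{-s_{0}/n-1}$, the union over all irreducible $Q$ of height up to $q_{k+1}\asymp q_{k}^{n+1}$ of the sets $\{y:|Q(y)|\leq\Delta\Psi(H_{Q})^{1/n}\}$ has measure $o(|I_{k}|)$. This is exactly the hard part of the Mahler--Sprind\v zuk circle of problems, localized to a very short interval. The difficulty is that for a polynomial $Q$ with a root $\alpha$ near $I_{k}$, the excluded interval around $\alpha$ has radius $\asymp\Psi(H_{Q})^{1/n}/|Q'(\alpha)|$, and there is no usable universal lower bound on $|Q'(\alpha)|$: irreducible polynomials with nearly-coincident roots (small discriminant, or small leading coefficient) produce excluded intervals far wider than $\Psi(H_{Q})^{1/n}/H_{Q}$, and controlling how many such polynomials can have a root inside a single $I_{k}$ is precisely what makes Sprind\v zuk's theorem nontrivial. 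You mention ``a discriminant-dependent factor'' but do not say how the sum over small-discriminant polynomials is controlled, and the naive count (number of polynomials $\times$ worst-case excluded measure) diverges relative to $|I_{k}|$. Until that is supplied, the construction cannot be completed. The paper's use of the pairwise Liouville inequality on the \emph{fixed} polynomial $R_{l}^{n}$ versus an arbitrary coprime $Q$ is exactly what eliminates this difficulty, and I would recommend replacing the exclusion step by that argument.

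Two minor points also worth noting: your transcendence argument via Roth is fine, but the paper simply takes $x$ transcendental by construction (Liouville already suffices since $w>2$); and the ``finitely many small-height exceptions'' absorbed by shrinking $\Delta$ is correct but would need to be stated explicitly, since $\Delta(n)$ must not be allowed to depend on $x$.
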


   Observe that $\Psi(H_{Q})^{1/n}$ increases with $n$, 
   hence the second claim becomes stronger
   as $n$ grows.  In fact, we construct a set of $x$, of positive Hausdorff dimension $1/(2n^2-n+1)$,  which satisfy the claims of Theorem~\ref{bugresu} for
   some associated function $\Psi=\Psi_{x}$ within the proof of Theorem \ref{bugresu}.
   It may even be feasible to take a common $\Psi$ independent of $x$. We continue the discussion of this issue  in
   Section~\ref{bgrs}.
  
  We present an application of our results on irreducibility, stated
  above,  that relates to the work of Pezzoni~\cite{Pezzoni}.
	To formulate claims in~\cite{Pezzoni}, we introduce some notation first. 
	Following Pezzoni, for given $c>0$,
	let $P_{n,\lambda}$ be the set of polynomials of degree
	at most $n$ with discriminant $D(P)$ bounded by
	\begin{equation}  \label{eq:disco}
	\vert D(P)\vert\leq cH_{P}^{2(n-1-\lambda)}
	\end{equation}
	and $P_{n,\lambda}^{\ast}$ the same set but restricted
	to irreducible polynomials $P$. Furthermore let
	\[
	A_{n,\lambda}(\Psi)= \{ x\in \R: \vert P(x)\vert \leq \Psi(H_P) 
	\quad \text{for i.m.} \; P\in P_{n,\lambda}\},
	\]
	and likewise define the subset $A_{n,\lambda}^{\ast}(\Psi)$ by
	considering only $P\in P_{n,\lambda}^{\ast}$.
	Thereby, if $\lambda>0$, 
	we restrict the approximation
	to those polynomials with relatively small discriminant
	(note that for generic polynomials we expect $D(P)$ to be of size roughly
	$H_{P}^{2n-2}$). 
	From now on, we deal with $n=3$ only.
	
	\begin{theorem}[Pezzoni] \label{PEZZ}
		Let $\Psi$ be an approximating function. Let $g$ be a dimension function such that $r\mapsto r^{-1}g(r)$ is decreasing
		and $r^{-1}g(r)\to \infty$ as $r\to 0$. Let $0\leq \lambda<9/20$.
		Then
		\begin{equation} \label{eq:PEZ1}
		\HH^{g}(A_{3,\lambda}^{\ast}(\Psi))=0, \qquad \text{if} \quad
		\sum_{q=1}^{\infty} g\left(\frac{\Psi(q)}{q}\right)q^{3-2\lambda/3}<\infty.
		\end{equation}
		If $\Psi(q)=q^{-\tau}$,  for some $\tau>0$, we denote $A_{3,\lambda}(\tau)=A_{3,\lambda}(\Psi)$. Then,
		\begin{equation}  \label{eq:PEZ2}
			\HH^{g}(A_{3,\lambda}(\tau))=0, \qquad \text{if} \quad
		\sum_{q=1}^{\infty} g(q^{-\tau-1})q^{3-2\lambda/3}<\infty.
		\end{equation}
	\end{theorem}

   As noticed in~\cite{Pezzoni}, the choice of $c$ in \eqref{eq:disco} is irrelevant for the claim.
   The step to deduce \eqref{eq:PEZ2}, i.e~\cite[Corollary 1.6]{Pezzoni}
   from \eqref{eq:PEZ1}, that is, \cite[Theorem~1.5]{Pezzoni}, uses the property that polynomial heights behave
   almost multiplicatively (up to a factor depending on 
   the degrees only),
   which is a crucial point in the proof of Theorem~\ref{thm4} as well. 
   As an application of Theorem~\ref{thm4}, 
   we generalize \eqref{eq:PEZ1} to the sets $A_{3,\lambda}(\Psi)$ 
   without restriction to irreducible polynomials,
   which in turn also
   generalizes \eqref{eq:PEZ2} to a wider class 
   of approximating functions. For $\lambda=0$,
   this in particular extends
   Bernik's result~\cite{Bernik} for $n=3$, i.e. $\dim_{\HH}(A_{3,0}(\tau))=4/(\tau+1)$
   where $\dim_{\HH}$ denotes Hausdorff dimension, 
   to more general approximation functions and to the Hausdorff measure which is obviously stronger than the Hausdorff dimension\footnote{To be precise, as a consequence of our result (Theorem \ref{abc}), we obtain an upper bound on the Hausdorff dimension of $A_{3,0}(\tau)$. The lower bound follows from the divergence result proved in \cite{BDV}.}.  

\begin{theorem} \label{abc}
	Suppose $\Psi$ is an approximating function that satisfies
	\eqref{tauu} and \eqref{gleichung}. Let $g$ be a dimension function so that
	$r\mapsto r^{-1}g(r)$ decreases and $r^{-1}g(r)\to \infty$ as $r\to 0$.  We have
	%
	\[
	\HH^{g}(A_{3,\lambda}(\Psi))=0, \qquad \text{if} \quad
	\sum_{q=1}^{\infty} g\left(\frac{\Psi(q)}{q}\right)q^{3-2\lambda/3}<\infty.
	\]
\end{theorem}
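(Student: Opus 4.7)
The plan is to split $A_{3,\lambda}(\Psi)$ according to whether $x$ is approximable by infinitely many \emph{irreducible} cubic polynomials in $P_{3,\lambda}$, or by infinitely many polynomials in $P_{3,\lambda}$ that are either \emph{reducible} or of degree at most $2$. For the former, Pezzoni's Theorem~1.5 of \cite{Pezzoni}, which treats the irreducible case within the framework we require, yields the conclusion directly: the $\HH^{g}$-measure of this portion is zero whenever $\sum g(\Psi(q)/q)q^{3-2\lambda/3}$ converges. For the latter, Theorem~\ref{thm4} replaces each reducible or low-degree $P$ by one of its irreducible divisors $Q$ of degree at most $2$ satisfying $|Q(x)| \leq C(x)\,\Psi(H_{Q})$, since a reducible integer polynomial of degree $3$ has only irreducible factors of degree $\leq 2$. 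First, one discards the countable, $\HH^{g}$-null set of $x$ algebraic of degree at most $3$ so that Theorem~\ref{thm4} applies to every remaining $x \in A_{3,\lambda}(\Psi)$.

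For the approximation of real $x$ by irreducible integer polynomials of degree at most $2$, the Khintchine--Jarn\'ik theorem (degree $1$) and classical Hausdorff-measure results on quadratic approximation on the real line (degree $2$) give that the $\HH^{g}$-measure vanishes whenever $\sum g(\Psi(q)/q)q^{d}<\infty$ for $d=1$ or $d=2$ respectively. Since the assumption $\lambda < 9/20$ yields $3 - 2\lambda/3 > 27/10 > 2$, convergence of our critical sum forces convergence of these coarser sums, so the degree-$\leq 2$ contribution vanishes. The constant $C(x)$ from Theorem~\ref{thm4} is absorbed by writing the relevant set as a countable union of sublevel sets $\{x : C(x) \leq M\}$ for $M \in \N$: on each such set the scaling $\Psi \mapsto M\Psi$ preserves convergence of the critical sum, since $g(r)/r$ decreasing gives $g(Mr) \leq Mg(r)$, and countable additivity of null sets completes the argument.

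The main obstacle I anticipate lies in the irreducible case: one must verify that Pezzoni's Theorem~1.5 admits the broader class of $\Psi$ satisfying \eqref{tauu} and \eqref{gleichung}, rather than just power functions $q \mapsto q^{-w}$. The proof of Theorem~1.5 proceeds via a ubiquity-type counting argument whose input is the approximation function, and conditions \eqref{tauu}, \eqref{gleichung} should allow it to go through essentially verbatim, after dyadic localisation in the height $H_{P} \asymp 2^{k}$ on which $\Psi(H_{P})$ is essentially constant; submultiplicativity via \eqref{tauu} is what makes the dyadic blocking effective, while \eqref{gleichung} controls the error in each block. A secondary subtlety is that when a reducible $P \in P_{3,\lambda}$ has a linear divisor $L$ and a quadratic divisor $Q$, one must correctly extract whichever factor gives the better approximation at $x$; this is precisely the flexibility Theorem~\ref{thm4} builds in, making the separation into the two cases above clean.
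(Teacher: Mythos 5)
Your plan is in the right spirit — Pezzoni's Theorem~1.5 for the irreducible part, Theorem~\ref{thm4} to pass from reducible to irreducible, discard the countable algebraic set, and absorb the constant $C(x)$ by a countable union over sublevel sets $\{x : C(x)\leq N\}$ combined with the observation that $g(Nr)\leq Ng(r)$ when $g(r)/r$ decreases — the last two steps are precisely what the paper does. There are, however, two points where your route diverges from the paper's, one being a misunderstanding and the other an unnecessary complication.

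First, your anticipated ``main obstacle'' — re-proving Pezzoni's Theorem~1.5 for $\Psi$ satisfying \eqref{tauu}, \eqref{gleichung} rather than power functions — does not exist. As the paper explicitly notes in the discussion preceding Theorem~\ref{abc}, \cite[Theorem~1.5]{Pezzoni} is already stated for general (decreasing) $\Psi$; it is \cite[Corollary~1.6]{Pezzoni} that restricts to $\Psi(q)=q^{-w}$. The conditions \eqref{tauu}, \eqref{gleichung} are needed only for the ``decoupling'' step of Theorem~\ref{thm4}, i.e.\ passing from general $\Psi$-approximable polynomials to irreducible ones. So the dyadic blocking argument you sketch is not required, and Pezzoni's result can be cited verbatim.

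Second, your case split is finer than needed. You separate $A_{3,\lambda}(\Psi)$ into (a) approximation by irreducible \emph{cubics} and (b) approximation by reducible-or-low-degree polynomials, then handle the degree-$\le 2$ irreducible divisors produced by Theorem~\ref{thm4} via Khintchine--Jarn\'ik and a separate quadratic-approximation result. This works — the exponent comparison $3-2\lambda/3 > 27/10 > 2 > 1$ is correct, and irreducible degree-$2$ polynomials have distinct irrational roots, so the troublesome repeated-root case of Theorem~\ref{Hussainparabola} never arises — but it is unnecessary. Pezzoni's $A^{\ast}_{3,\lambda}$ is defined with irreducible polynomials of degree \emph{at most} $3$, so the degree-$\le 2$ irreducibles output by Theorem~\ref{thm4} are already in its scope, and the paper simply shows $A_{3,\lambda,N}(\Psi)\subseteq A^{\ast}_{3,\lambda}(N\Psi)\cup\overline{\mathbb{Q}}$ and applies Pezzoni once. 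Your route imports two extra classical theorems and implicitly splits Theorem~\ref{thm4} into a restricted-to-reducibles version, none of which is needed. One small plus of the paper's single application of Pezzoni that your split conceals: it must also be checked (tacitly, in the paper) that the irreducible $Q$ of degree $\le 2$ lie in $P_{3,\lambda}$; this holds for large $H_Q$ since $|D(Q)|\lessless H_Q^2 \le H_Q^{2(2-\lambda)}$ for $\lambda<1$. Your approach avoids this check at the cost of the extra classical inputs, so the two routes are roughly of equal length once fully written out, but the paper's is the more economical in citations.
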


The claim extends \eqref{eq:PEZ1} to the larger set $A_{3,\lambda}(\Psi)$
in place of $A_{3,\lambda}^{\ast}(\Psi)$, and is therefore stronger
and cleaner, upon assuming \eqref{tauu} and \eqref{gleichung}. 
Compared to \eqref{eq:PEZ2}, our result is
valid for a wider class of approximating functions that satisfy
our conditions \eqref{tauu} and \eqref{gleichung}, including all power functions.
Indeed conditions \eqref{tauu} and  \eqref{gleichung} allow for
providing the ``decoupling'' property indicated as an essential problem when going beyond multiplicative functions $\Psi$ in~\cite[Section~6]{Pezzoni}.
Our proof will show that similar transitions
from irreducible to general polynomials can be readily derived from Theorem~\ref{thm4} for
 any degree $n$.
 Similarly, a version of Theorem~\ref{abc}  where we take the
 Mahler measure as the height function without
 requiring the condition \eqref{gleichung} could be 
 readily obtained if the corresponding claim can be shown for irreducible polynomials.
 It seems that the method in~\cite{Pezzoni} can be extended to
  this Mahler measure setting, but we do not attempt to carry it out. 

\section{Remarks on GBSP on planar curves}  \label{rempc}


%

From the previous discussion, it should be clear that the main problem within the GBSP lies in proving the convergence part. The GBSP has been resolved over nondegenerate planar curves due to successive works of Hussain \cite{Hussain} and Huang \cite{Huang} albeit some restrictions on the dimension and  approximating functions.  Here we state the most general result due to  Huang \cite{Huang}.

\begin{theorem}[Huang, 2017]\label{BHH} Let $\Psi$ be a decreasing approximating function and $s\in~(0, 1]$. 
	Let $\CC$ be any  $C^2$ planar curve which is nondegenerate everywhere except possibly on a set of zero Hausdorff $s$-measure. Then
	
	\begin{equation}\label{Huangpla}\HH^s(\DDp\cap\CC)=0\quad {\rm  if}\quad \sum_{q=1}^\infty\Psi^s(q)q^{2-s}<\infty.\end{equation}
\end{theorem}
In a recent work, answering a question posed in \cite{Simmons10}, Huang \cite[Theorem 3]{Huang3} proved that the conclusion \eqref{Huangpla} holds without assuming monotonicity on the approximating function $\Psi$ for the parabola, that is for $\CC=\VV_2$. A natural intriguing question is that if the according conclusion holds if $\Psi$ is replaced by a multivariable approximating function,  $\psi:\Z^2\to[0, \infty)$ such that $\psi(\qq) \rightarrow~0 \text{ as } \|\qq\|:=\max(|q_1|,  |q_2|)\rightarrow~\infty.$
Hereby we sum over the non-zero integer points $\qq$ and therefore 
naturally expect that the exponent of $\qq$ drops by $1$ compared 
to \eqref{Huangpla}. However,
we show that it is not possible to remove the monotonicity assumption on the multivariable approximating function $\psi$. To be precise we show that the monotonicity assumption on a standard parabola $\VV_2=\{(x,x^{2})\in\mathbb{R}^{2}: x\in[0,1]\}$ is necessary.
Define $\DD_2(\psi)\cap\VV_2$ similarly as for 
single-variable approximation functions,
\begin{equation*}
\DD_2(\psi) \cap  \VV_2=
\left\{x\in[0,1]:   |q_1x+q_2x^2+p|<\psi(\qq)  \   \text{for i.m.} \ (p, q_1, q_2)\in\Z^{3}\setminus\{\0\}                               \right\}.
\end{equation*}

\begin{theorem}\label{Removingmono}There exists a non-monotonic multivariable approximating function $\psi$ such that for 
	any $s\in(0,1)$
	we have
	\begin{equation}\label{Multicounterexample}\HH^s(\DD_2(\psi)\cap\VV_2)=\infty\quad {\rm  but}\quad \sum_{\qq\in\Z^2\setminus\{\0\}}\psi^s(\qq)\|\qq\|^{1-s}<\infty.\end{equation}
	
\end{theorem}

We may extend the claim to $s=1$ upon replacing $\infty$ by $1$
in \eqref{Removingmono},
required by our definition of $\VV_2$.

\section{Hausdorff measure and dimension}\label{hm}

For completeness we give below a very brief introduction to Hausdorff measures and dimension. For further details see \cite{Falconer_book2013}.

Let
$\Omega\subset \R^n$.
 Then for any $0 < \rho \leq \infty$, any finite or countable collection~$\{B_i\}$ of subsets of $\R^n$ such that
$\Omega\subset \bigcup_i B_i$ and $\mathrm{diam} (B_i)\le \rho$ is called a \emph{$\rho$-cover} of $\Omega$.
Let
\[ 
\HH_{\rho}^{g}(\Omega)=\inf \sum_{i} g\left(\diam (B_i)\right),
\]
where the infimum is taken over all possible $\rho$-covers 
$\{B_i\}$ of $\Omega$. The \textit{$g$-dimensional Hausdorff measure of $\Omega$} is defined to be
\[
\HH^g(\Omega)=\lim_{\rho\to 0}\HH_\rho^g(\Omega).
\]
The map $\HH^g:\P(\R^n)\to [0,\infty]$ defines an outer measure
on all sets in $\mathbb{R}^n$, which becomes a proper 
measure when restricted to the subset of $\HH^g$-measurable sets,
i.e. sets $A$ that satisfy $\HH^g(B)= \HH^g(A\cap B)+\HH^g(B\setminus A)$ for
any $B\in \mathbb{R}^n$. 
In the case that $g(r)=r^s \;\; (s\geq 0)$, the measure $\HH^g$ is denoted $\HH^s$ and is called \emph{$s$-dimensional Hausdorff measure}. For any set $\Omega\subset \R^n$ one can easily verify that there exists a unique critical value of $s$ at which the function $s\mapsto\HH^s(\Omega)$ ``jumps'' from infinity to zero. The value taken by $s$ at this discontinuity is referred to as the \textit{Hausdorff dimension} of $\Omega$ is denoted by $\dim_{\HH} \Omega $; i.e.
\[
\dim_\HH \Omega :=\inf\{s\geq 0\;:\; \HH^s(\Omega)=0\}.
\]
The countable collection $\{B_i\}$ is called a \emph{fine cover} of $\Omega$ if for every $\rho>0$ it contains a subcollection that is a $\rho$-cover of $\Omega$.

%
%
We state the Hausdorff measure analogue of the famous Borel--Cantelli lemma (see \cite[Lemma 3.10]{BernikDodson}) which will allow us to estimate the Hausdorff measure of certain sets via calculating the Hausdorff $f$-sum of a fine cover.

\begin{lemma}[Hausdorff--Cantelli lemma]\label{bclem}
Let $\{B_i\}\subset\R^n$ be a fine cover of a set $\Omega$ and let $g$ be a dimension function such that
\begin{equation}
\label{fdimcost}
\sum_i g\left(\diam(B_i)\right) \, < \, \infty.
\end{equation}
Then $$\HH^g(\Omega)=0.$$
\end{lemma}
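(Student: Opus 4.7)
The plan is to bound the $\rho$-approximation $\HH_\rho^g(\Omega)$ from above by a tail of the convergent series in \eqref{fdimcost}, and then let $\rho$ tend to zero. Fix $\eta > 0$ arbitrary. Since the series $\sum_i g(\diam(B_i))$ converges, I would first choose a finite index set $I_0$ such that
$$\sum_{i \notin I_0} g(\diam(B_i)) \;<\; \eta.$$
Next, set $\rho_0 := \min\{\diam(B_i) : i \in I_0,\ \diam(B_i) > 0\}$, with the convention that this minimum is $+\infty$ when the set is empty. Because $I_0$ is finite, $\rho_0 > 0$.

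Now, for any $0 < \rho < \rho_0$, I would invoke the fine cover hypothesis to extract a subcollection $\{B_i\}_{i \in J_\rho}$ of $\{B_i\}$ that is itself a $\rho$-cover of $\Omega$. Every $i \in J_\rho$ satisfies $\diam(B_i) \leq \rho < \rho_0$, so by construction of $\rho_0$ no such index can simultaneously lie in $I_0$ and have positive diameter; the remaining indices in $J_\rho \cap I_0$ necessarily satisfy $\diam(B_i) = 0$ and therefore contribute $g(0) = 0$ to the sum. Using the subcollection $\{B_i\}_{i\in J_\rho}$ as a test cover in the definition of $\HH_\rho^g$ then gives
$$\HH_\rho^g(\Omega) \;\leq\; \sum_{i \in J_\rho} g(\diam(B_i)) \;\leq\; \sum_{i \notin I_0} g(\diam(B_i)) \;<\; \eta.$$
Letting $\rho \to 0^+$ yields $\HH^g(\Omega) \leq \eta$, and since $\eta > 0$ was arbitrary, the conclusion $\HH^g(\Omega) = 0$ follows.

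There is no genuine obstacle in this argument: the notion of a fine cover was designed precisely to permit passage to subcollections of arbitrarily small mesh, while the finiteness of the truncating set $I_0$ together with the positivity of $\rho_0$ automatically excludes any large-diameter $B_i$ from the covers used for small $\rho$. The only minor bookkeeping point is the possibility of degenerate sets $B_i$ with $\diam(B_i) = 0$, which is absorbed harmlessly by the normalisation $g(0) = 0$ built into the definition of a dimension function.
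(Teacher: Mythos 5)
Your proof is correct and complete. Note, however, that the paper does not actually supply a proof of this lemma at all: it simply cites \cite[Lemma 3.10]{BernikDodson}, so there is no in-paper argument to compare against. Your argument is the standard one for Hausdorff--Cantelli: truncate the convergent series to a small tail, observe that the finitely many excluded indices have a positive minimum nonzero diameter $\rho_0$, use the fine-cover property to extract a $\rho$-cover for each $\rho<\rho_0$, and conclude that the resulting test cover only incurs the cost of the tail (plus zero from any degenerate diameter-zero sets), so $\HH_\rho^g(\Omega)<\eta$ for all sufficiently small $\rho$ and hence $\HH^g(\Omega)\le\eta$. The bookkeeping with $\diam(B_i)=0$ and the convention $\min\emptyset=+\infty$ is handled correctly, and the inequality $\sum_{i\in J_\rho}g(\diam(B_i))\le\sum_{i\notin I_0}g(\diam(B_i))$ uses only $g\ge 0$, which follows from $g$ being increasing with $g(0)=0$.
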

%
%


\section{Proofs of Theorems}


\subsection{Proof of Theorem~\ref{thm2}}

The key idea of the proof of Theorem~\ref{thm2} is a refined treatment of the case of polynomials
with a repeated root, where the additional condition \eqref{zzz}
was required in the original proof of Theorem~\ref{Hussainparabola}. Summarising the proof of Theorem \ref{Hussainparabola}, we recall that
\begin{align*}
\mathcal D(\VV_2, \Psi)= \mathcal D_{(1)}(\VV_2, \Psi)\cup \mathcal D_{(2)}(\VV_2, \Psi)
\end{align*}
with sets defined in Section~\ref{theparabola}.
Hence, the desired statement that $\mathcal H^g(\mathcal D(\VV_2, \Psi))=0$ follows by establishing separately,
\begin{center}
  \[ \mathbf{Case \ I:}\qquad \qquad\qquad \mathcal H^g(\mathcal D_{(1)}(\VV_2, \Psi))=0\]
  \[\mathbf{Case \ II:} \ \ \quad \qquad\qquad\mathcal H^g(\mathcal D_{(2)}(\VV_2, \Psi))=0.\]
\end{center}

As can be seen in the paper \cite[pp.55--57]{Hussain}, the proof of {\bf Case II} is clean and does not assume any restrictive assumption \eqref{zzz} on the dimension functions $g$. The assumption \ref{zzz} on the dimension function $g$  comes into play when the polynomial $P$ has repeated roots ({\bf Case I}). In this case we may write $P(x)=a_{2}\left(x-\frac{u}{v}\right)^{2}$,
so $|P(x)|$ being small corresponds to a rational number
$u/v$ being close to $x$. We can thus deduce
the claim from the one-dimensional convergence case of Theorem \ref{JSDV}.

%

%

For convenience let us rewrite the set $\mathcal D_{(1)}(\VV_2, \Psi)$ restricted to the repeated roots case. We have
%
\[
\DD_{(1)}(\VV_2,\Psi)=\left\{ x\in [0, 1): \exists \ \text{i.m.}\; P(x)= a_{2,P}x^{2}+a_{1,P}x+a_{0,P}= a_{2,P}\left(x-\frac{u_P}{v_P}\right)^{2}:\;
\vert P(x)\vert\leq \Psi(\|\aa_{P}\|)\right\},
\]
where for clarity we have indicated the dependency of $P$ in the index and
identified $P(t)=a_{2,P}t^2+ a_{1,P}t+a_{0,P}$ 
with its coefficient vector 
$\aa_{P}=(a_{2,P},a_{1,P},a_{0,P})$ of norm
$\|\aa_{P}\|= \max\{ |a_{1,P}|, |a_{2,P}| \}$. 
Clearly $a_{2,P}\neq 0$ unless $P\equiv 0$.
It is further understood that
we take the involved $u_P,v_P$ the unique induced coprime integer 
pair with $v_P>0$.
We want to estimate the Hausdorff measure of this set. 
Since $(u_P,v_P)=1$ and $P\in\mathbb{Z}[t]$ we have $v_P^{2}\vert a_{2,P}$
and thus $v_P^{2}\leq \vert a_{2,P}\vert$ since $a_{2,P}\neq 0$.

Again since $a_{2,P}\neq 0$, we may conclude
\begin{align*}
\DD_{(1)}(\VV_2,\Psi)&=  \left\{ x\in[0,1]: \exists \ \text{i.m.}\; P:
\vert x-u_P/v_P\vert \leq \sqrt{\Psi(\|\aa_P\|)}/\sqrt{a_{2,P}}\right\} \\
&\subseteq \left\{ x\in[0,1]: \exists \ \text{i.m.}\; P: \vert v_P\vert\leq \sqrt{\|\aa_P\|},\;
\vert x-u_P/v_P\vert \leq \sqrt{\Psi(\|\aa_P\|)}/v_P\right\}  \\
&= \left\{ x\in[0,1]: \exists \  \text{i.m.}\; P:
\vert v_P\vert\leq \sqrt{\|\aa_P\|},\; \vert v_Px-u_P\vert \leq \sqrt{\Psi(\|\aa_P\|)}\right\},
\end{align*}
since $v_P\leq \sqrt{\vert a_{2,P}\vert} \leq \sqrt{\|\aa_P\|}$. But
obviously $\vert u_P\vert < v_P$ as soon as $\|\aa_P\|$ is large enough since $x\in[0,1)$ and $\Psi$ tends to $0$.
Thus, we have
\[
\DD_{(1)}(\VV_2,\Psi)\subseteq B_{\Psi}:=
\left\{ x\in[0, 1): \exists \ \text{i.m. } P:
\max\{ \vert u_P\vert, \vert v_P\vert\}\leq \sqrt{\|\aa_P\|},\; \vert v_Px-u_P\vert \leq \sqrt{\Psi(\|\aa_P\|)}\right\}.
\]
Identifying $\sqrt{\|\aa_P\|}$ with $q$,
from the one-dimensional case of Theorem~\ref{JSDV} (note that
the convergence claim does not require monotonicity on $\Psi$)
it follows that
$$\HH^{g}(\DD_{(1)}(\VV_2,\Psi))\leq \HH^{g}(B_{\Psi})=0$$ as soon as $r^{-1}g(r)\to\infty$ as $r\to 0$,  $r\mapsto r^{-1}g(r)$ is decreasing,  and
\begin{equation} \label{eq:1}
\sum_{q=1}^{\infty} qg\left(\frac{\sqrt{\Psi(q^{2})}}{q}\right)<\infty.
\end{equation}
In order to deduce \eqref{eq:1} from the assumption
\begin{equation} \label{eq:2}
\sum_{q=1}^{\infty}q^2 g\left(\frac{\Psi(q)}{q}\right)<\infty, 
\end{equation}it suffices to find conditions on $\Psi, g$ such that \eqref{eq:2} implies \eqref{eq:1}. 
Obviously the criterion is met for any increasing
$g$ and $\Psi$ that satisfies \eqref{bed}.

\begin{remark} The reader might notice here that the argument presented above is modified from \cite[p.54]{Hussain} and hence the series \eqref{eq:1} corresponding to the repeated root case is slightly different from the corresponding series $\sum_{k=1}^{\infty} 2^{k} g\left(\sqrt{\Psi(2^{k})}/2^{k/2}\right)$  that arose from the corresponding proof in \cite{Hussain}. 
%
%
%
\end{remark}

\subsection{Proof of Theorem~\ref{1thm}}
Since $g$ is increasing, \eqref{series1} can be estimated as
\begin{equation} \label{betw}
\sum_{k=1}^{\infty} 2^{3k} g\left(\Psi(2^k)/2^k\right)\geq \sum_{k=1}^{\infty} 2^{3k} g\left(2^{-k(s_2 + 1)}\right)=: A_{g}.
\end{equation}
Similarly \eqref{series3} can be bounded from above by
\[
\sum_{k=1}^{\infty} 2^{2k} g\left(\frac{\sqrt{\Psi(2^{2k})}}{2^{k}}\right) \leq 
\sum_{k=1}^{\infty}  2^{2k} g\left(2^{-k(s_1 + 1)}\right)=:B_{g}.
\]
Let $\ell= s_1+ 1$.
Since $g$ increases, for every $k$ when considering three
consecutive terms indexed $3k,3k+1,3k+2$ in the sum $B_{g}$
we obtain
\[
2^{6k}g(2^{-3k\ell})+ 2^{6k+2}g(2^{-(3k+1)\ell})
+ 2^{6k+4}g(2^{-(3k+2)\ell})\leq (1+4+16)2^{6k}g(2^{-3k\ell})\]
and hence
\[
\sum_{k=3}^{\infty} 2^{2k} g(2^{-k\ell})\leq 21
\sum_{k=1}^{\infty} 2^{6k} g(2^{-3k\ell}).
\]
We see that the series $B_{g}$ converges
as soon as
\begin{equation} \label{between}
\sum_{k=1}^{\infty} 2^{6k} g(2^{-3k\ell})<\infty.
\end{equation}
Assume the left hand side of \eqref{betw} converges. This implies that the series $A_{g}$ converges, and in turn the convergence of the subseries $\sum_{k=1}^{\infty} 2^{6k} g(2^{-2k(s_2+1)})$ derived from restricting to even indices. Comparing the latter series with \eqref{between}, by using the monotonicity of $g$, it follows that the series $B_{g}$ converges as soon as $2(s_{2}+1)\leq 3\ell$, which is equivalent to the hypothesis on $s_{1},s_{2}$. Since  the series $B_{g}$ is bigger than \eqref{series3}, indeed the
convergence of \eqref{series1} implies the convergence
of \eqref{series3}.

\subsection{Proof of Theorem~\ref{2thm}}
Fix $\alpha > 3$ and $\beta>6$. 
Let $(Q_n)_{n\geq 1}$ be a sequence of integral powers
of $2$ of the form
\[
Q_n=2^{k_{n}}, \qquad Q_n > Q_{n - 1}^\alpha,
\]
that is, $k_{n}/k_{n-1}>3$. It will be convenient to let $k_0=0$. 
Let $\Psi$ be defined piecewise by the formula 
$\Psi(q) = Q_n^{2-2\beta}$ for all 
$Q_{n - 1}^2 < q \leq Q_n^2$. Clearly $\Psi$ is non-increasing.
Let $g(r)= \max_{n\geq 1} g_{n}(r)$ where
\[
g_n(r) = \begin{cases}
Q_n^{-2}, & r \geq Q_n^{-\beta}\\[2ex]
rQ_n^{\beta-2}, & r \leq Q_n^{-\beta}.
\end{cases}
\]
Notice that the values
$Q_n^{-\beta}=2^{-\beta k_{n}}$ decay as $n$ increases. Thus
the (right) neighborhood of $0$ where $g_{n}$ increases
become shorter whereas
the slopes of $g_{n}$ in these segments grow as $n$ increases.
Hence $g$ is piecewise linear with slopes among $\{0\}\cup \{ Q_{n}^{\beta-2}: n\geq 1\}$, and the smaller $r$
gets the larger the integer $n$ becomes for which $g(r)=g_{n}(r)$ holds locally. We further see that $g(r)/r$ is non-increasing. Indeed, 
monotonicity is easy to check for each $g_n(r)/r$ and therefore
it extends to the pointwise maximum $g(r)/r$. 
Moreover,
from the description of $g$ we readily verify that
\begin{equation} \label{erste}
g(Q_n^{-\beta}) = g_{n}(Q_n^{-\beta})= Q_n^{-2}, \qquad n\geq 1,
\end{equation}
which implies $g(r)/r\to\infty$ as $r\to 0$ by the aforementioned monotonicity and $\beta>2$, as well as
\begin{equation} \label{zweite}
g(Q_{n}^{-\gamma})\leq \max\{ Q_{n+1}^{-2},g_{n}(Q_{n}^{-\gamma})\}=
\max\left\{ Q_{n+1}^{-2}, Q_{n}^{\beta-2-\gamma}\right\}, \qquad \gamma\geq \beta.
\end{equation}
From \eqref{erste} we see that
\begin{align*}
\sum_{k=1}^{\infty} 2^{2k} g\left(\frac{\sqrt{\Psi(2^{2k})}}{2^{k}}\right) &\geq \sum_{n=1}^{\infty} 2^{2k_n} g\left(\frac{\sqrt{\Psi(2^{2k_n})}}{2^{k_n}}\right) \\ &= \sum_{n=1}^{\infty} Q_n^2 g\left(\frac{\sqrt{Q_n^{2-2\beta}}}{Q_n}\right) \\
 &= 
 \sum_{n=1}^{\infty} Q_n^2 g(Q_n^{-\beta}) \\ &= \sum_{n=1}^{\infty} Q_n^2 Q_n^{-2} \\ &= \infty.
\end{align*}
Thus \eqref{series3} diverges. We need to show that \eqref{series1} converges. 
To do so, we split the sum over $k$ in partial sums running
from $2k_{n-1}$ to $2k_{n}$ and then sum over $n$, i.e.
\[
\sum_{k=1}^{\infty} 2^{3k} g\left(\frac{\Psi(2^k)}{2^k}\right)=
\sum_{n=1}^{\infty} \sum_{k=2k_{n-1}+1}^{2k_{n}} 2^{3k} g\left(\frac{\Psi(2^{k})}{2^{k}}\right).
\]
Now by the definition of $\Psi$, its value at $2^{k}$
is locally constant
$Q_{n}^{2-2\beta}$ for $2k_{n-1}+1\leq k\leq 2k_{n}$. Thus by the
monotonicity of $g$ and since $$\sum_{k=2k_{n-1}+1}^{2k_{n}} 2^{3k}\leq \sum_{k=1}^{2k_{n}} 2^{3k}  \leq 2\cdot 2^{6k_{n}}=2Q_{n}^{6}$$
using $k_0=0$ we conclude that
\begin{align*}
\sum_{k=1}^{\infty} 2^{3k} g(\Psi(2^k)/2^k)&= \sum_{n=1}^{\infty} 
\sum_{k=2k_{n-1}+1}^{2k_{n}} 2^{3k} g\left(\frac{Q_{n}^{2-2\beta}}{2^{k}}\right) \\
&\leq \sum_{n=1}^{\infty} 
\sum_{k=2k_{n-1}+1}^{2k_{n}} 2^{3k} g(Q_{n}^{2-2\beta})
\\ &\leq
\sum_{n=1}^{\infty} 
 2Q_{n}^{6} g(Q_{n}^{2-2\beta}).
\end{align*}
By \eqref{zweite} with $\gamma=2\beta-2>\beta$ we infer
\begin{align*}g(Q_{n}^{-\gamma})&=g(Q_{n}^{2-2\beta})\\ &\leq \max\{ Q_{n+1}^{-2}, Q_{n}^{-\beta}\}\\ &\leq Q_{n}^{-\min\{2\alpha,\beta\}}.\end{align*} Combining our
observations yields
\[
\sum_{k=1}^{\infty} 2^{3k} g(\Psi(2^k)/2^k) \leq
\sum_{n=1}^\infty2Q_{n}^{6-\min\{ 2\alpha,\beta \}}.
\]
Since $\min\{ 2\alpha,\beta \}>6$ by assumption and $(Q_{n})_{n\geq 1}$ grows exponentially, the series converges.


\subsection{Proof of Corollary~\ref{korola}} Similar to the proof of Theorem~\ref{thm2}, the proof of Corollary~\ref{korola} also uses the one-dimensional
divergence part of  Theorem~\ref{JSDV}. Notice that their claims
can be equivalently formulated with $x$ restricted to any set with
nonempty interior, we do not carry this out. It will be convenient
to choose the interval $I=[0,1/2)$. Denote $\Phi(q)=\sqrt{\Psi(q^{2})}$ and notice that $\Phi$ decreases since $\Psi$ does. Moreover
\[
\Theta= \left\{ x\in\left[0,\frac{1}{2}\right):\; \vert vx-u\vert\leq \Phi(v) \ \text{for i. m.} \  (u, v)\in \Z^2\right\},
\]
where it is understood that $u=u_x, v=v_x$ depend on $x$.
By Theorem~\ref{JSDV}, which we may apply as $\Phi$ decreases, we have $\HH^{g}(\Theta)=\infty$ as soon as \[\sum_{q=1}^{\infty} q g\left(\frac{\Phi(q)}{q}\right)\asymp\sum_{k=1}^{\infty} 2^{2k} g\left(\frac{\sqrt{\Psi(2^{2k})}}{2^{k}}\right)=\infty.
\] 
Then $x\in \DD(\VV_2, \Phi)=\{x\in[0,1) : (x,x^2) \in \DD_2(\Phi)\} $ if and only if there exist infinitely many integral quadratic polynomials $P$ such that
\[
|P(x)| \leq \Phi(H_{P}).
\]
We claim that
\[
 \DD(\VV_2, \Phi)\supset \Theta.
\]
If this is true then clearly $\HH^g(\DD(\VV_2, \Phi) )\geq \HH^{g}(\Theta)=\infty$.
Fix any $x\in \Theta$.
Define the polynomial $Q(t)=Q_{u,v}(t)=vt-u$ 
for any $u,v$ as in the
definition of $\Theta$. Since $x<1/2$ and $\Phi$ tends to $0$, if the height $H_{Q}$ is sufficiently large
we have $2\vert u\vert< \vert v\vert$. Thus
$H_{Q}=\max\{ \vert u\vert, \vert v\vert\}= \vert v\vert$. 
Moreover, if we let $P(t)= Q(t)^{2}= v^{2}t^{2}-2uvt+u^{2}$, then
$P$ has height $H_{P}=v^{2}$ provided $H_{P}$ (or $H_{Q}$) 
was chosen large enough.
We verify that
\[
\vert P(t)\vert= \vert Q(t)\vert^{2}\leq \Phi(v)^{2}=\Psi(v^{2})=\Psi(H_{P}).
\]
Since this holds for almost all polynomials $Q(t)$ 
derived from the definition of $\Theta$, we have $x\in  \DD(\VV_2, \Phi)$.
Since $x\in \Theta$ was arbitrary, indeed $\DD(\VV_2, \Phi) \supset \Theta$ 
and the proof is complete.

\subsection{Proof of Theorem~\ref{thm4}}

For this proof we use a property of polynomials 
sometimes referred to as Gelfond's lemma, see \cite[Lemma A.3]{Bugeaud}. 
Recall the notation $H_{P}$ for the height of a polynomial $P\in\mathbb{Z}[t]$.

\begin{lemma}[Gelfond]\label{gelfondl} For any finite set of polynomials $P_{1},\ldots,P_{k}$ 
of heights
$H_{P_{i}}$ for $1\leq i\leq k$,
whose product has degree at most $n$, we have
\begin{equation} \label{gelf}
H_{P_{1}}H_{P_{2}}\cdots H_{P_{k}} \lessless_{n} H_{P_{1}P_{2}\cdots P_{k}}
\lessless_{n} H_{P_{1}}H_{P_{2}}\cdots H_{P_{k}},
\end{equation}
where the implied constants depend on $n$ only.

\end{lemma}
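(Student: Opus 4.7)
The plan is to establish the two inequalities in~\eqref{gelf} separately. The right-hand inequality $H_{P_{1}\cdots P_{k}}\lessless_{n}\prod_{i} H_{P_{i}}$ is essentially bookkeeping. Writing $P_{i}(t)=\sum_{j=0}^{d_{i}} a_{i,j}t^{j}$ with $|a_{i,j}|\leq H_{P_{i}}$ and expanding the product, the coefficient of $t^{m}$ in $P_{1}\cdots P_{k}$ is a sum of products $a_{1,j_{1}}\cdots a_{k,j_{k}}$ indexed by multi-indices with $j_{1}+\cdots+j_{k}=m\leq n$. After absorbing any scalar factors into one non-constant term, we may assume each $P_{i}$ is non-constant, so $k\leq n$, and the number of such multi-indices is bounded by a constant $c(n)$; the triangle inequality then gives $H_{P_{1}\cdots P_{k}}\leq c(n)\prod_{i}H_{P_{i}}$.

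For the left-hand inequality, which is the genuine content of the lemma, I would pass to the multiplicative Mahler measure $M_{P}=|a|\prod_{i}\max\{1,|\alpha_{i}|\}$ associated with $P(t)=a\prod_{i}(t-\alpha_{i})$. Its decisive property is exact multiplicativity, $M_{P_{1}\cdots P_{k}}=\prod_{i} M_{P_{i}}$, which drops out of the definition. The link with the (non-multiplicative) height is given by the classical inequalities
\[
M_{P}\leq \sqrt{n+1}\, H_{P}, \qquad H_{P}\leq \binom{n}{\lfloor n/2\rfloor} M_{P},
\]
valid for every $P\in\Z[t]$ of degree at most $n$; the first combines $M_{P}\leq \|P\|_{2}$ (Landau's inequality) with $\|P\|_{2}\leq \sqrt{n+1}\,H_{P}$, and the second follows from expressing the coefficients of $P$ via elementary symmetric functions of its roots. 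Combining these yields
\[
\prod_{i=1}^{k} H_{P_{i}}\leq \prod_{i=1}^{k}\binom{d_{i}}{\lfloor d_{i}/2\rfloor} M_{P_{i}} \leq c'(n)\, M_{P_{1}\cdots P_{k}} \leq c'(n)\sqrt{n+1}\, H_{P_{1}\cdots P_{k}},
\]
where $c'(n)$ absorbs the product of binomial factors, which is bounded in terms of $n$ since $\sum d_{i}\leq n$.

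In effect no single step is hard: the upper bound is a direct expansion, while the lower bound reduces mechanically to the multiplicativity of $M_{P}$ together with the standard equivalence $M_{P}\asymp_{n} H_{P}$, already cited in the paper from~\cite{Mahler3}. The only mild obstacle, if any, is the careful tracking of the $n$-dependence of the constants through the chain of inequalities, in particular ensuring that both the combinatorial factor in the upper bound and the binomial-measure factor in the lower bound remain bounded in terms of $n$ alone.
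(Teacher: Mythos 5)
The paper does not prove Lemma~\ref{gelfondl}; it simply cites it as \cite[Lemma~A.3]{Bugeaud}, so there is no ``paper's proof'' to compare against. Your argument is a correct self-contained proof of the cited statement, and it is the standard one: the right-hand inequality is the expansion-and-count estimate, and the left-hand inequality is reduced to the exact multiplicativity of the Mahler measure together with the two-sided comparison $\binom{n}{\lfloor n/2\rfloor}^{-1}H_{P}\leq M_{P}\leq\sqrt{n+1}\,H_{P}$. The small bookkeeping points you flag are handled correctly: reducing to non-constant $P_i$ gives $k\leq n$, and since $\sum d_{i}\leq n$ the product of the binomial factors (and likewise the number of multi-indices in the expansion) is bounded by, say, $2^{n}$, which is all the $n$-dependence one needs. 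It is worth noting that the authors are clearly aware of exactly this route: near the end of their proof of Theorem~\ref{thm4} they invoke the very same Mahler inequalities from~\cite{Mahler3} and the multiplicativity $M(P/R)M(R)=M(P)$ to pin down the explicit constant in \eqref{gl22}. So your proof reproduces the argument underlying the reference the paper relies on, and meshes with the tools the paper already uses elsewhere.
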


The implied factor $c(n)$ in Lemma~\ref{gelfondl} causes 
inconvenient technical difficulties in view of our application to very general
classes of approximating functions.
Indeed, if we could assume $H_{PQ}\geq H_{P}H_{Q}$, the proof 
of Theorem~\ref{thm4} below would considerably
simplify and we could drop the conditions \eqref{gleichung} and \eqref{gl22} respectively
in its two claims.

\begin{proof}[Proof of Theorem~\ref{thm4}]
Fix $x\in \R$ not algebraic of degree $n$ or less.
By assumption the set
\[
Z_{\Psi}^{\prime}(x):=\left\{ P(t)= a_{n}t^{n}+\cdots +a_{0}: 
\; \vert P(x)\vert\leq \Psi(H_{P})\right\}
\]
is infinite.  
We first show that infinitely many
$P\in Z_{\Psi}^{\prime}(x)$ can be chosen with coprime coefficients, i.e. $\eta(P):=(a_{0},\ldots,a_{n})=1$. 
Indeed, if $P\in Z_{\Psi}^{\prime}(x)$ then also 
$\tilde{P}=P/\eta(P)\in Z_{\Psi}^{\prime}(x)$ since 
\[
\vert \tilde{P}(x)\vert=\frac{\vert P(x)\vert}{\eta(P)}
\leq \frac{\Psi(H_{P})}{\eta(P)}= 
\frac{\Psi(\eta(P)H_{\tilde{P}})}{\eta(P)}
\leq
\frac{\Psi(H_{\tilde{P}})}{\eta(P)}\leq \Psi(H_{\tilde{P}}),
\]
where we used that $\Psi$ decays. On the
other hand, since $\tilde{P}(x)\neq 0$ by our assumption on $x$,
only finitely many scalar multiples $P=\eta(P)\tilde{P}$ of any $\tilde{P}$ 
can be in $Z_{\Psi}^{\prime}(x)$
since $$\vert P(x)\vert=  \eta(P)\vert\tilde{P}(x)\vert>\Psi(H_{\tilde{P}})>\Psi(H_{P})$$
as soon as $\eta(P)> H_{\tilde{P}}/\vert\tilde{P}(x)\vert$. 
Hence, if
the total number of $\tilde{P}$ were finite, so would be $Z_{\Psi}^{\prime}(x)$, contradicting
our hypothesis. The claim is shown.

Denote the corresponding infinite subset of $Z_{\Psi}^{\prime}(x)$ with
coprime coefficients by $Z_{\Psi}(x)$.
For arbitrary $P\in Z_{\Psi}(x)$ of large height,
denote its factorization
over $\mathbb{Z}[t]$ by
\begin{equation} \label{prod}
P(t)= Q_{1}(t)Q_{2}(t)\cdots Q_{k}(t), \qquad Q_{i}\in\mathbb{Z}[t],\; k=k(P).
\end{equation}
Here the  polynomials $Q_{i}$ are irreducible and not necessarily distinct, and none is constant.
By the pigeonhole principle, infinitely many such $P\in Z_{\Psi}(x)$ must have
the same $k=k(P)$, so we may fix $k$ and consider only those $P$ with this factortization
pattern. Let us denote the corresponding subset of $Z_{\Psi}(x)$ by $Z_{\Psi,k}(x)$.
We may assume $k\geq 2$, otherwise claims \eqref{eqq},
\eqref{gl3} are trivial 
with $Q=P$ and $C=1$ and $\epsilon=0$. 

Now for technical reasons we define the following quantity: 
Let $l\in \{0,1,\ldots,k\}$
be the maximum integer with the property that 
for any polynomials $P$ in $Z_{\Psi,k}(x)$
each factoring as in \eqref{prod} into irreducible factors,
there are at least $l$ among its factors $Q_i$ 
which are of absolutely bounded height.
First notice that by Gelfond's estimate we have $l<k$ since
if all heights of $Q_{1},\ldots,Q_{k}$ were absolutely 
bounded then so would be any possible product,
contradicting the assumption that $Z_{\Psi,k}(x)$ is of infinite cardinality. 

The defintion of $l$ guarantees that for some infinite
subset $Z_{\Psi,k,l}(x)$ of polynomials $P$
in $Z_{\Psi,k}(x)$ we have (after relabeling 
of indices if necessary) that 
the induced $Q_i$ in \eqref{prod} satisfy that the heights of
$Q_{1},\ldots, Q_{l}$ are absolutely bounded, whereas 
$H_{Q_i}\to \infty$ for $i\in \{ l+1,l+2,\ldots,k\}$.

Assume we are given 
$P\in Z_{\Psi,k,l}(x)$ with factorization into $Q_{i}$ as in \eqref{prod}
and let $H=H_{P}$.
Then since $Q_{1},\ldots,Q_{l}$ are of absolutely bounded height
there are only finitely many choices. Thus, as $x$ is not algebraic 
of degree $n$ or less, 
for some positive constants $H_{x}, D_{x}$ depending only on $n$ and $x$
but independent from our choice of $P\in Z_{\Psi,k,l}(x)$ 
we have 
\begin{equation} \label{hase}
1\leq H_{Q_{1}}H_{Q_{2}}\cdots H_{Q_{l}}\leq H_{x}, \qquad
\vert Q_{1}(x)Q_{2}(x)\cdots Q_{l}(x)\vert \geq D_{x}.
\end{equation}

For simplicity write $R_{i}(x)=Q_{l+i}(x)$ for $1\leq i\leq k-l$ and let
\[
R(x):=R_{1}(x)\cdots R_{k-l}(x)=Q_{l+1}(x)Q_{l+2}(x)\cdots Q_{k}(x).
\]
Recall
that this product is nonempty for $P\in Z_{\Psi,k,l}(x)$
of sufficiently large height $H=H_{P}$ since $k>l$. Then with
$c=c(n)$ the implied constant in \eqref{gelf} we have
$$\frac{H}{cH_{x}}\leq H_{R}\leq cH$$ and 
\begin{equation} \label{eq:prod2}
\vert R(x)\vert=\frac{\vert P(x)\vert}{\vert Q_{1}(x)\vert \cdots \vert Q_{l}(x)\vert}\leq \frac{\Psi(H)}{D_{x}}.
\end{equation}
Write $H_{i}=H_{R_{i}}$ for simplicity. By Gelfond's Lemma equation \eqref{gelf},
for the heights $H_{i}$ we
have
\begin{equation} \label{eq:kochk}
\prod_{i=1}^{k-l} H_{i}\leq cH_{R}\leq c^{2}\cdot H.
\end{equation}
In view of \eqref{eq:prod2}, 
we may write
\begin{equation} \label{eq1}
\vert R_{i}(x)\vert = |R(x)|^{g_{i}} \leq 
\frac{\Psi(H)^{g_{i}}}{D_{x}^{g_{i}}},
\qquad\qquad  1\leq i\leq k-l,
\end{equation}
and similarly by \eqref{eq:kochk}
\begin{equation} \label{eq2}
H_{i}\leq (c^{2}H)^{h_{i}}= c^{2h_{i}}\cdot H^{h_{i}}, \qquad\qquad  1\leq i\leq k-l,
\end{equation} 
where $g_{i}\geq 0, h_{i}\geq 0$ are real numbers that satisfy
\begin{equation} \label{freiheit}
\sum_{i=1}^{k-l} g_{i}=\sum_{i=1}^{k-l} h_{i}=1.
\end{equation}
%

In view of \eqref{freiheit}, there exists some index $1\leq j\leq k-l$ with
$g_{j}/h_{j}\geq 1$.
Without loss of generality assume
\begin{equation} \label{wloss}
\frac{g_{1}}{h_{1}}\geq 1.
\end{equation}
From \eqref{eq1}, \eqref{eq2} and since $\Psi$ decreases we see
\begin{equation} \label{abschae}
\vert R_{i}(x)\vert \leq \frac{\Psi(H_{i}^{1/h_{i}}/c^{2})^{g_{i}}}{D_{x}^{g_{i}}}, \qquad  1\leq i\leq k-l.
\end{equation}
%
First assume the condition \eqref{gleichung} on $\Psi$. 
Applied to $c_{1}=1/c^{2}$ we see that
\[
\vert R_{i}(x)\vert \leq \frac{c_{2}^{g_{i}}}{D_{x}^{g_{i}}}\cdot \Psi(H_{i}^{1/h_{i}})^{g_{i}}\leq c_{3}\cdot\Psi(H_{i}^{1/h_{i}})^{g_{i}}, \qquad  1\leq i\leq k-l,
\]
with $c_{3}=c_{3}(x)=\max\{ 1,c_{2}/D_{x}\}$ independent from $g_{i}$.
By assumption $\Psi$ satisfies \eqref{tauu}, 
which we observe as equivalent to
\begin{equation} \label{h}
\Psi(q^{\alpha})\leq \Psi(q)^{\alpha}, \qquad \text{ for any}\; \alpha\geq 1 \; \text{and large} \; q.
\end{equation}
With $\alpha=1/h_{i}\geq 1$
and by \eqref{wloss}, and as we can assume $\Psi(H_{1})<1$, we derive
\begin{align*}
\vert R_{1}(x)\vert &\leq c_{3}(x)\Psi(H_{1}^{1/h_{1}})^{g_{1}} \\ &\leq
c_{3}(x)\Psi(H_{1})^{g_{1}/h_{1}}\\ &\leq c_{3}(x)\Psi(H_{1}).
\end{align*}
By assumption $R_{1}=Q_{l+1}$
is irreducible. Moreover, we see that there are
infinitely many pairwise distinct $R_{1}$ arising in this way as the definition of $l$ 
implies $H_{1}=H_{R_{1}}\to \infty$. 
The proof of the claim is finished.
\end{proof}


\subsection{Proof of Theorem~\ref{weil}}
	The proof is similar to the first 
	claim of Theorem~\ref{thm4}, observing
	that the multiplicativity of the
	Mahler measure allows us to let $c_{1}=c_{2}=1$ and this leads us to $c_{3}(x)=\max\{1,1/D_{x}\}$. If the property $(\ast)$ holds
	then $D_{x}$ is the empty product so $C=c_{3}(x)=1$.


%
%

%
%
%
 %

\subsection{Proof of Corollary~\ref{obn}}
 We readily check that the functions that 
	satisfy \eqref{tauu} and \eqref{gleichung}
	are closed under multiplication and any function 
	$q\mapsto q^{-\tau}$ satisfies both of these conditions. We check the conditions for any $\varphi=\varphi_{i}$ as given in the statement of the corollary. 
	First we show that the function $q\mapsto \log \varphi(q)/\log q$ decays
	for $q\geq q_{0}$.
	Since $\varphi(q)>1$ for $q\geq q_{0}$,
	this is clear if $\varphi$ decays. If $\varphi$ is increasing
	and differentiable, then
	$(\log \varphi)^{\prime}(q)>0$ as well and differentiating
	$q\mapsto \log \varphi(q)/\log q$ with the chain rule, its decay thus
	gives the equivalent condition
	$$\frac{\log \varphi(q)}{(\log \varphi)^{\prime}(q)}> q\log q.$$ 
	We readily verify this inequality via using our assumptions, 
	which in particular
	imply $\log \varphi(q)\geq c_{0}>0$ for $q\geq q_{0}$ 
	and $(\log \varphi)^{\prime}(q)>0$ for large $q$.
	Thus \eqref{tauu} holds in both cases.  
	If $\varphi$ increases, then $\varphi(q/c_{1})\leq \varphi(q)$ for $c_{1}>1$ 
	so we may put $c_{2}=1$
	in \eqref{gleichung}. On the other hand, if $\varphi$ decreases for $q\geq q_{0}$,
	then by our first assumption $\varphi(q)>1$. It is easy to see
	that we cannot have $\varphi(q/c_{1})\geq 2\varphi(q)$ for 
	some $c_{1}>1$ and infinitely many arbitrarily
	large $q$. Thus for large $q$ we can take $c_{2}=2$ and \eqref{gleichung} holds in both cases as well.


\subsection{Proof of Theorem~\ref{bugresu}}  \label{bgrs}

For the proof of Theorem~\ref{bugresu} we employ a result of
Davenport and Schmidt \cite[Lemma~8]{MR246822} related to Liouville's inequality~\cite[Corollary~A2]{Bugeaud} on the distance
of two algebraic numbers. It states
that for any integer $n\geq 1$ and any real number $x$, there is a constant $c=c(n,x)>0$ 
so that for any two integer polynomials 
$P,Q$ of degree at most $n$ and without common factor,
if we let $H=\max \{ H_{P},H_{Q}\}$ then 
\begin{equation} \label{eq:buschlei}
\max \{ |P(x)|, |Q(x)| \} \geq c H^{-(2n-1)}. 
\end{equation}
See also Lemma 3.1 in the paper of Bugeaud and Schleischitz \cite{MR3557124} for a refinement.

\medskip

	Fix some $w>n(2n-1)$ and consider the number $x=\xi_{w}>0$
	as constructed in the proof of Corollary~1 in
	\cite{MR2791654}. 
	The key feature of these numbers is that the partial quotients of $\xi_{w}=[a_{0};a_{1},\ldots]$ satisfy the growth condition $a_{i+1}\asymp a_{i}^{w}$. 
	We omit rephrasing the exact construction and only 
	discuss the properties derived in~\cite{MR2791654}. 
	Firstly,
	the  denominators of the 
	convergents $(p_{l}/q_{l})_{l\geq 0}$ to $\xi_{w}$
	have the same growth rate as well, 
	more precisely there exist $c_{2}>c_{1}>0$ such that
	\begin{equation}  \label{eq:ivo}
	c_{1}q_{l-1}^{w} < q_{l} < c_{2}q_{l-1}^{w}, \qquad\qquad l\geq 1.
	\end{equation}
	By fundamental results on continued fractions, we infer that when
	we let  $R_{l}(t):= q_{l}t-p_{l}$ then for some $c_{4}>c_{3}>0$ we have
	\begin{equation}  \label{eq:cofr}
	c_{3}q_{l}^{-w} < \vert R_{l}(\xi_{w})\vert < c_{4}q_{l}^{-w}, \qquad\qquad l\geq 1.
	\end{equation}
	For convenience we may assume
	$H_{R_{l}^n}=H_{R_{l}}^n=q_{l}^n$, in particular $q_{l}>p_{l}>0$.
	Indeed, by $$H_{R_{l}^{n}}= \max\{q_{l}^{n},np_{l}q_{l}^{n-1},\ldots,p_{l}^{n}\}$$
	we see that
	this can be arranged by altering the first few partial quotients of $\xi_{w}$ so that it is close enough to $0$, if needed (alternatively
	we may introduce some additional factor in the proof below since
	$H_{R_{l}^n}\lessless_{n,x} q_{l}^n$ 
	is obvious). 
	
	Define $\Psi=\Psi_{\xi_{w}}$ as the locally constant function that takes the value
	$\Psi(q)=c_{4}^{n}q_{l}^{-wn}$ when $q\in (q_{l-1}^n, q_{l}^n]$,
	for $l\geq 1$.
	Since $q_{l}$ tends monotonically to infinity, $\Psi$ decreases
	to $0$ as desired.
	It has discontinuities at points $q_{l}^{n}$. 
	Notice further that $q_{l}\in (q_{l-1}^n, q_{l}^n]$ in view
	of \eqref{eq:ivo} and $w>n(2n-1)>n$, therefore
	\begin{equation}  \label{eq:ivo2}
	\Psi(q_{l})= c_{4}^{n}q_{l}^{-wn}.
	\end{equation}
	By construction,
	the reducible polynomial $R_{l}^n(t)$ has degree $n$ and 
	height $H_{R_{l}^n}=H_{R_{l}}^n=q_{l}^n$. Furthermore, by \eqref{eq:cofr} it satisfies
	\[
	\vert R_{l}^n(\xi_{w})\vert= \vert R_{l}(\xi_{w})\vert^n < c_{4}^n q_{l}^{-nw}=\Psi(q_{l}^n)= \Psi(H_{R_{l}^n}).
	\]
	This verifies the first claim of the theorem for the 
	reducible polynomials $R_{l}^n$.
	For the second claim, take $Q(t)$ any irreducible polynomial.
	If $Q=R_{l}$ for some $l$, then 
	since $H_{R_{l}}=\max\{ p_{l}, q_{l}\}=q_{l}$
	and by \eqref{eq:cofr}, \eqref{eq:ivo2} we readily check that
	\[
	|Q(\xi_{w})| = | R_{l}(\xi_{w})| > c_{3}q_{l}^{-w} = c_{5}(c_{4}^{n}q_{l}^{-wn})^{1/n} = c_5 \Psi(q_{l})^{1/n} = c_5 \Psi(H_{Q})^{1/n}
	\]
	for the constant $c_{5}=c_{3}/c_{4}>0$. 
	We may put $\Delta(n)=c_5$. Now assume otherwise $Q$ is not among the $R_{l}$.
	Then by its irreducibility, it is coprime to all these polynomials. 
	Let $l$ be the index 
	with $H_{Q}\in (q_{l-1}^n,q_{l}^n]$.
	On the one hand, by definition of $\Psi$ we have
	\begin{equation}  \label{eq:jjj}
	\Psi(H_{Q}) = c_{4}^{n}q_{l}^{-wn}.
	\end{equation}
	On the other hand, since $R_{l}$ and $Q$ are coprime
	and $H_{R_{l}^n}\geq H_{Q}$, from \eqref{eq:buschlei}  for some
	$c_{8}>0$ we get
	\[
	\max\{ |R_{l}^n(\xi_{w})|, |Q(\xi_{w})|  \} \geq c_{8} H_{R_{l}^n}^{-(2n-1)}
	= c_{8}q_{l}^{-n(2n-1)}.  
	\]
	Now since $w>n(2n-1)>2n-1$, for large $l$ from \eqref{eq:cofr} we infer 
	\[
	|R_{l}^n(\xi_{w})|< c_4^n q_l^{-wn} < c_{8}q_{l}^{-n(2n-1)}, \qquad\qquad l\geq l_{0}.
	\]
	Therefore for large $l$ we must have
	\[
	|Q(\xi_{w})| \geq c_{8}q_{l}^{-n(2n-1)}= c_{9} \Psi(H_{Q})^{ \frac{2n-1}{w} },
	\]
	where we used \eqref{eq:jjj} and let $c_{9}=c_{8}c_{4}^{-n(2n-1)/w}>0$. Since we may assume $\Psi(H_{Q})<1$,
	our choice $w>n(2n-1)$ guarantees that the right hand 	side exceeds $c_{9}\Psi(H_{Q})^{1/n}$ again, so we may take $\Delta(n)=c_{9}$. Clearly we can just decrease $\Delta(n)$, if needed,
	to deal with small $l$. This proves the second claim.
	%
	%
	%
	%
	%
	\begin{remark} Our choice of locally constant, discontinuous $\Psi$
	is just for simplicity. It is easy to alter $\Psi$ to derive an arbitrarily
	smooth strictly decaying function that still satisfies the claim of the theorem.
	\end{remark}

\medskip

The proof can be extended to show the claims for
any $x=[a_{0};a_{1},\ldots]$ with convergent sequence $[a_{0};a_{1},\ldots,a_{i}]=p_{i}/q_{i}$
with the property $\liminf_{i\to\infty} \log a_{i+1}/\log q_{i}>n(2n-1)-1$,
or equivalently $\liminf_{i\to\infty} \log q_{i+1}/\log q_{i}>n(2n-1)$.
The set of such numbers has Hausdorff dimension $(2n^2-n+1)^{-1}$
due to Tan--Zhou \cite{MR3956788}.
In the example constructed in the proof above, the function
$\Psi$ depends on $x=\xi_{w}$.
With some more effort, the proof can be refined to provide 
uncountably many $x$
with the properties of Theorem~\ref{bugresu}
for a fixed approximating function $\Psi$. Possibly the same can be obtained
for a set of Hausdorff dimension $(2n^2-n+1)^{-1}$ as above. 

For the proof of Theorem~\ref{abc}, we combine Theorem~\ref{thm4} 
with the method from the proof of~\cite[Corollary~1.6]{Pezzoni}.

\subsection{Proof of Theorem~\ref{abc}}
	Let
	$P_{n,\lambda}^{\ast}\subseteq P_{n,\lambda}$ denote 
	the set of {\em irreducible}
	integer polynomials
	of degree at most $n$ satisfying \eqref{eq:disco} and the set
	$A_{n,\lambda}(\Psi)$ is accordingly altered
	to $A_{n,\lambda}^{\ast}(\Psi)=A_{n,\lambda}^{\ast}(\Psi)$ by restricting to polynomials in $P_{n,\lambda}^{\ast}$. Recall that
	Theorem~\ref{PEZZ} claims that
	if $r\mapsto r^{-1}g(r)$ decreases and $r^{-1}g(r)\to \infty$ as $r\to 0$, then \eqref{eq:PEZ1} holds, which we recall for convenience here:
	\begin{equation} \label{irr}
	\HH^{g}(A_{3,\lambda}^{\ast}(\Psi))=0, \qquad \text{if} \quad
	\sum_{q=1}^{\infty} g\left(\frac{\Psi(q)}{q}\right)q^{3-2\lambda/3}<\infty.
	\end{equation}
	This is the claim of our 
	Theorem~\ref{abc}
	when restricted to irreducible polynomials.
	We must make the transition to considering all polynomials
	on the left hand side.
	
Note that the discriminant condition within the definition
of $A_{n,\lambda}(\Psi)$ is
	automtacially satisfied for $\lambda=0$. Hence,
	similarly to the proof of~\cite[Corollary~1.6]{Pezzoni},
	our Theorem~\ref{thm4} implies
	\begin{equation} \label{union}
	A_{3,\lambda}(\Psi)\subseteq  A_{3,\lambda}^{\ast}(\Psi) \bigcup
	\cup_{N=1}^{\infty}  (A_{1,0}^{\ast}(N\Psi)\cup A_{2,0}^{\ast}(N\Psi)),
	\end{equation}
	since $A_{1,0}^{\ast}(N\Psi)\cup A_{2,0}^{\ast}(N\Psi)$ is the
	set of real numbers $x$ for which we can take the constant $C(3,x)=N$
	in Theorem~\ref{thm4}. Hence,
	by sigma-additivity of measures, it suffices to show that 
	for any $N\geq 1$ the sets 
	\[
	A_{1,0}^{\ast}(N\Psi), \qquad A_{2,0}^{\ast}(N\Psi),\qquad A_{3,\lambda}^{\ast}(\Psi)
	\]
	all have $g$-measure $0$. For $A_{3,\lambda}^{\ast}(\Psi)$ this is just the claim of Theorem~\ref{PEZZ}.
	For the other sets, consider $N$ fixed.
	For the sets $A_{1,0}^{\ast}(N\Psi)$, it follows from
	Theorem~\ref{JSDV} applied to $N\Psi$ that
		\begin{equation*} 
	\HH^{g}(A_{1,0}^{\ast}(N\Psi))=0, \qquad \text{if} \quad
	\sum_{q=1}^{\infty} g\left(\frac{N\Psi(q)}{q}\right)q<\infty.
	\end{equation*}
	By our assumption that $r\mapsto r^{-1}g(r)$ decays we have $g(N\Psi(q)/q)\leq Ng(\Psi(q)/q)$.
	Now because of $3-2\lambda/3\geq 3-2(9/20)/3=27/10>1$,
	our assumption of convergence of the right side in \eqref{irr} 
	indeed implies
	\[
	\sum_{q=1}^{\infty} g\left(\frac{N\Psi(q)}{q}\right)q\leq N\sum_{q=1}^{\infty} g\left(\frac{\Psi(q)}{q}\right)q\leq
	N\sum_{q=1}^{\infty} g\left(\frac{\Psi(q)}{q}\right)q^{3-2\lambda/3}<\infty.
	\]
	Finally for the set with respect to quadratic polynomials,
	the proof of~\cite[Case II]{Hussain} for $N\Psi$ shows
		\begin{equation*} 
	\HH^{g}(A_{2,0}^{\ast}(N\Psi))=0, \qquad \text{if} \quad
	\sum_{q=1}^{\infty} g\left(\frac{N\Psi(q)}{q}\right)q^2<\infty.
	\end{equation*}
	Again similarly as above
	\[
	\sum_{q=1}^{\infty} g\left(\frac{N\Psi(q)}{q}\right)q^2\leq N\sum_{q=1}^{\infty} g\left(\frac{\Psi(q)}{q}\right)q^2.
	\]
	To conclude it suffices to notice that
	\[
	\sum_{q=1}^{\infty} g\left(\frac{\Psi(q)}{q}\right)q^2
	\leq
	\sum_{q=1}^{\infty} g\left(\frac{\Psi(q)}{q}\right)q^{3-2\lambda/3}<\infty
	\]
	since $3-2\lambda/3\geq 27/10>2$ and by our hypothesis
	in \eqref{irr}.

\subsection{Proof of Theorem \ref {Removingmono} }\label{section3} 
%
 Call $\qq=(q_{1},q_{2})\in\mathbb{Z}^2\setminus\{{\0}\}$ a good pair if $\qq= (-2ab,b^2)$ for some $a,b$ coprime with $|a|<b$.
 Let $\Psi$  be a single-variable approximating function, and derive 
 the multi-variable function $\psi$ by letting
  $\psi(\qq) = b^2\Psi^2(b)$ for good pairs $\qq$, and $\psi(\qq) = 0$ otherwise. For good pairs,
  we calculate $$|\qq\cdot(x,x^2)+a^2|=|q_{1}x+q_{2}x^2+a^2|=q_{2}(x-a/b)^2.$$
  Therefore a point $(x,x^2)$ with $x\notin \overline{\mathbb{Q}}$ and $|x|<1$ is $\psi$-approximable if
  there exist infinitely many $a,b$ coprime such that $|a|<b$ and 
 $q_{2} (x - a/b)^2 < b^2\Psi^2(b).$ 
 Rearranging and plugging in $q_{2}=b^2$ gives 
  $|x - a/b| < \Psi(b).$ We apply the divergence case for $n=1$ of 
  Theorem \ref{JSDV}, 
  and notice that obviously coprimality of $a,b$ is not a restriction. 
  It yields that for $s\in(0,1)$, any decreasing $\Psi$ and the dimension function $g(r)=r^s$, the latter set has full $s$-dimensional Hausdorff measure as soon as the series

\begin{equation}\label{eqn1}
\sum_{b=1}^{\infty} b \Psi^s(b) 
 \end{equation}
 diverges. On the other hand, in view of $\|\qq\|=\max\{|q_{1}|,|q_{2}|\}=b^2$ 
 for good pairs,
 the corresponding series for the generalized Baker-Schmidt problem for the multivariable approximating function $\psi$ becomes

\begin{align}\label{eqn2} \sum_{\qq\in \mathbb{Z}^2\setminus\{{{\0}}\} } \|\qq\|^{1-s} \psi(\qq)^s &=
\sum_b {  \sum_{|a|<b, (a,b)=1} } b^{2(1-s)} b^{2s} \Psi^{2s}(b)\\
&\le \sum_b {  \sum_{|a|<b} b^{2(1-s)} b^{2s} \Psi^{2s}(b) \lessless \sum_b b^3 \Psi^{2s}(b)}. \nonumber  
\end{align}

To complete the proof we choose an (ultimately) decreasing approximating function $\Psi$ such that \eqref{eqn2} converges but \eqref{eqn1} diverges, for example
$$  \Psi(q) = q^{-2/s} \log^{-1/s}(q),\;\; (q\geq 2).$$

\medskip

\noindent {\bf Acknowledgements.}  The first-named author was supported by the Australian Research Council Discovery
Project (200100994).  The third-named author was supported by the Royal Society Fellowship. Part of this work was carried out when the second- and third-named authors visited La Trobe University. We are thankful  to the Australian Mathematical Sciences Institute (AMSI) for the travel support. Finally, we thank the anonymous referee for useful comments which has improved the clarity and presentation of the article.

%

\providecommand{\bysame}{\leavevmode\hbox to3em{\hrulefill}\thinspace}
\providecommand{\MR}{\relax\ifhmode\unskip\space\fi MR }
\providecommand{\MRhref}[2]{%
  \href{http://www.ams.org/mathscinet-getitem?mr=#1}{#2}
}
\providecommand{\href}[2]{#2}

\end{document}